\def\today{\ifcase\month\or
  January\or February\or March\or April\or May\or June\or
  July\or August\or September\or October\or November\or December\fi
  \space\number\day, \number\year}
\newtheorem{theorem}{Theorem}
\newtheorem{conjecture}{Conjecture}
\newtheorem{lemma}[theorem]{Lemma}
\newtheorem{corollary}[theorem]{Corollary}
\theoremstyle{definition}
\theoremstyle{remark}
\newcommand{\ft}{\widehat}
\newcommand{\mc}{\mathcal}
\newcommand{\B}{\mc{B}}
\newcommand{\F}{\mc{F}}
\newcommand{\G}{\mc{G}}
\newcommand{\K}{\mc{K}}
\newcommand{\LL}{\mc{L}}
\newcommand{\M}{\mc{M}}
\newcommand{\OO}{\mc{O}}
\newcommand{\PP}{\mc{P}}
\newcommand{\Sp}{\mathbb{S}}
\newcommand{\W}{\mc{W}}
\newcommand{\C}{\mathbb{C}}
\newcommand{\R}{\mathbb{R}}
\newcommand{\Q}{\mc{Q}}
\newcommand{\Z}{\mathbb{Z}}
\newcommand{\bx}{\boldsymbol{x}}
\newcommand{\by}{\boldsymbol{y}}
\newcommand{\bz}{\boldsymbol{z}}
\newcommand{\bxi}{\boldsymbol{\xi}}
\newcommand{\bze}{\boldsymbol{\zeta}}
\newcommand{\bn}{{\boldsymbol{n}}}
\newcommand{\bv}{{\boldsymbol{v}}}
\newcommand{\bu}{{\boldsymbol{u}}}
\newcommand{\bm}{{\boldsymbol{m}}}
\newcommand{\bl}{{\boldsymbol{l}}}
\newcommand{\p}{\varphi}
\newcommand{\ds}{\text{\rm d}s}
\newcommand{\dt}{\text{\rm d}t}
\newcommand{\du}{\text{\rm d}u}
\newcommand{\dx}{\text{\rm d}x}
\renewcommand{\d}{\text{\rm d}}
\newcommand{\es}[1]{\begin{equation}\begin{split}#1\end{split}\end{equation}}
\newcommand{\est}[1]{\begin{equation*}\begin{split}#1\end{split}\end{equation*}}
\newcommand{\ov}{\overline}
\renewcommand{\H}{\mc{H}}
\newcommand{\re}{{\rm Re}\,}
\newcommand{\bo}{\boldsymbol}
\newcommand{\om}{\omega}
\newcommand{\la}{\lambda}
\newcommand{\ga}{\gamma}
\begin{document}

%--------------------------------------------------

\title[Gon\c{c}alves]{Orthogonal Polynomials and Sharp Estimates for the Schr\"odinger Equation}
\author[Gon\c{c}alves]{Felipe Gon\c{c}alves}
\date{\today}
\subjclass[2010]{42B37, 41A44, 33C45}
\keywords{Strichartz estimates, sharp estimates, Schr\"odinger equation, orthogonal polynomials}
\address{University of Alberta, Mathematical and Statistical Sciences, CAB 632, Edmonton, Alberta, Canada T6G 2G1}
\email{felipe.goncalves@ualberta.ca}
\urladdr{sites.ualberta.ca/$\sim$goncalve}
\allowdisplaybreaks
%\numberwithin{theorem}{subsection}{section}
\numberwithin{equation}{section}

%------------------------------------------------

\begin{abstract}
In this paper we study sharp estimates for the Schr\"odinger operator via the framework of orthogonal polynomials. We use Hermite and Laguerre polynomial expansions to produce sharp Strichartz estimates for even exponents. In particular, for radial initial data in dimension $2$, we establish an interesting connection of the Strichartz norm with a combinatorial problem about words with four letters. We use spherical harmonics and Gegenbauer polynomials to prove a sharpened weighted inequality for the Schr\"odinger equation that is maximized by radial functions.
\end{abstract}

%-----------------------------------------------------

\maketitle

%---------------------------------------------------------

\section{Introduction}
Let $2\leq p,q \leq \infty$. The Strichartz estimate for the Schr\"odinger equation (see \cite[Theorem 2.3]{Tao}) states that there exists a constant $C$ such that
\es{\label{St-est}
\|\|e^{it\Delta}f(\bx)\|_{L^p(\d\bx)}\|_{L^q(\d t)} \leq C \|f(\bx)\|_{L^2(\d\bx)},
}
for all $f\in L^2(\R^d,\d\bx)$, where $\Delta$ is the Laplacian in $\R^d$ and $e^{it\Delta}f(\bx)$ denotes the solution of the Schr\"odinger equation $\partial_t u(\bx,t)=i\Delta u(\bx,t)$ with initial data $u(\bx,0)=f(\bx)$. The exponents above satisfy the following relation
\est{%\label{exp-eq}
\frac{d}{p}+\frac{2}{q} = \frac{d}{2}, \ \ \ (p,q,d)\neq (\infty,2,2).
}
Since Strichartz's original work \cite{St} in 1977, the search for maximizers of this space-time estimate was launched and it is conjectured that a function $f(\bx)$ maximizes the above inequality if and only if $f(\bx)$ is a Gaussian.
 
\begin{conjecture}\label{St-conj}
A function $f(\bx)$ maximizes \eqref{St-est} if and only if it has the form $Ae^{-B\|\bx\|^2 + \bu\cdot\bx}$, where $A,B\in\C$, $\re B>0$ and $\bu\in\C^d$.
\end{conjecture}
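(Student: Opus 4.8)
The plan is to reduce Conjecture~\ref{St-conj} to a family of sharp multilinear inequalities indexed by the even exponents $p=q=2k$ and then attack each one with the orthogonal-polynomial machinery of this paper. The starting point is the classical linearization: for $p=q=2k$ one has, on the Fourier side,
\begin{equation*}
\|e^{it\Delta}f\|_{L^{2k}(\dt\,\dx)}^{2k}=\int \mc{K}_{d,k}(\bxi_1,\dots,\bxi_{2k})\prod_{j=1}^{k}\tf(\bxi_j)\prod_{j=k+1}^{2k}\ov{\tf(\bxi_j)}\,\d\bxi ,
\end{equation*}
where $\mc{K}_{d,k}$ is an explicit distribution supported on the quadric $\{\bxi_1+\dots+\bxi_k=\bxi_{k+1}+\dots+\bxi_{2k},\ |\bxi_1|^2+\dots+|\bxi_k|^2=|\bxi_{k+1}|^2+\dots+|\bxi_{2k}|^2\}$. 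Foschi and (independently) Hundertmark--Zharnitsky noticed that when $(d,k)=(1,3)$ or $(d,k)=(2,2)$ the kernel $\mc{K}_{d,k}$ collapses, after an affine change of variables, to a \emph{constant} multiple of surface measure on a sphere, so that the sharp constant and the Gaussian extremizers drop out of Cauchy--Schwarz. The proposal for a general even exponent is to expand $\tf$ not in plane waves but in the Hermite basis (and, for radial data, the Laguerre basis), turning the right-hand side into a Hermitian form in the Hermite coefficients of $\tf$ whose matrix entries are the ``Strichartz moments'' obtained by testing $\mc{K}_{d,k}$ against products of Hermite functions.

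First I would compute these moments in closed form. The tool is the Mehler/Hermite generating identity, which diagonalizes the relevant iterated convolution and plays the role, for general $(d,k)$, that the spherical-measure collapse plays in the two known cases; the computation should exhibit the Gaussian $h_{\mathbf 0}$ as an eigenvector of the resulting symmetric operator $T_{d,k}$ on $L^2(\R^d)$ with eigenvalue equal to the conjectured sharp constant times $\|h_{\mathbf 0}\|_2^{2k}$. Next I would show that $h_{\mathbf 0}$ is the \emph{top} eigenvector: because the form respects the grading by total Hermite degree (equivalently, by energy under the harmonic-oscillator flow), the extremal problem decouples level by level into a finite-dimensional one, and on each level the competitors are Hermite polynomials times the Gaussian. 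This is exactly where the combinatorial reformulation advertised in the abstract enters — for $(d,k)=(2,2)$ the moments count words in four letters — and the inequality to be proved becomes the statement that the all-zero multi-index dominates the moment sequence, which I would attempt to establish by a majorization/convexity argument on that sequence. Uniqueness in Conjecture~\ref{St-conj} would then follow by chasing the Cauchy--Schwarz equality case together with the known action of the symmetry group (dilations, modulations, translations and the metaplectic/lens transform) on the set of extremizers.

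Finally, to pass from even exponents to an arbitrary admissible pair $(p,q)$ I would use the conformal (lens) transform, under which $e^{it\Delta}$ becomes the harmonic-oscillator propagator and the Gaussians become its stationary ground state; combined with a profile-decomposition / concentration-compactness input this reduces the general case to the even-exponent inequalities on a compact time interval and upgrades ``Gaussians realize the norm'' to ``Gaussians are the only maximizers''.

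The hard part will be two steps. The combinatorial inequality — that the ground-state multi-index maximizes the Strichartz moment form — is a genuinely new extremal problem: for $d\geq 2$, $k\geq 3$ there is no closed-form collapse of $\mc{K}_{d,k}$ to a constant, the moments both grow and change sign, and making the positivity of the ``gap'' operator $\lambda_{\max}I-T_{d,k}$ transparent seems to require an explicitly constructed dominating majorant rather than a soft argument. And the removal of the even-exponent restriction is precisely why the full conjecture remains open: interpolation alone only pins down the \emph{value} of the norm at Gaussians, not their optimality at non-even or endpoint exponents, so a complete proof would have to marry the polynomial computations here with the existence and compactness theory for Strichartz extremizers.
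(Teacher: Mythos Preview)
The statement you are proposing to prove is labelled \emph{Conjecture} in the paper for a reason: the paper does not prove it, and neither does your outline. What you have written is a research programme, not a proof, and the two places you flag as ``the hard part'' are precisely the places where the programme fails with current tools.

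First, the level-by-level decoupling you describe is carried out in the paper (Theorems~\ref{equiv-thm-hermite} and~\ref{L2-L2-estimate-hermite}), and the outcome is the opposite of what you need. Writing $p=2k$, $q=2k\ell$, the Hermitian form you call $T_{d,k}$ is the operator $\PP$ of \eqref{operator-P-hermite}, and the paper computes its restriction $\PP_S$ to each degree-$S$ block exactly: $\PP_S$ is a projection (hence has norm $1$) \emph{only} when $\mu:=(k-1)\ell d/2=1$, i.e.\ in the three already-known cases. For every other even exponent $\mu>1$ and $\|\PP_S\|$ grows like $S^{\mu-1}$; in particular $\PP$ is unbounded on $\F$. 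So the ``gap operator $\lambda_{\max}I-T_{d,k}$'' you hope to show is positive does not exist as a bounded object, and no majorization/convexity argument on the moment sequence can repair this: the blow-up is intrinsic to the Hermite linearization, not an artefact of estimation. Your proposal to ``exhibit $h_{\mathbf 0}$ as the top eigenvector'' therefore cannot go through for $(d,k)$ outside the known list.

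Second, the passage from even exponents to arbitrary $(p,q)$ by lens transform plus profile decomposition is not known to preserve sharp constants or extremizers. The lens transform does convert $e^{it\Delta}$ into the harmonic-oscillator propagator, but the resulting mixed-norm problem on a compact interval is no easier; interpolation fixes only the value of the norm at Gaussians, as you yourself note, and concentration-compactness gives existence of maximizers (this is Shao's result cited as \cite{Sh}) without identifying them. Nothing in your sketch supplies the missing rigidity.

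In short, the paper's own contribution is to show that the orthogonal-polynomial route recovers the three known cases cleanly and to explain, via the norm computation for $\PP_S$, exactly why that route stalls beyond them. Your proposal restates the obstruction rather than overcoming it.
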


\noindent {\bf Remark.} We note that Gaussian functions maximize \eqref{St-est} if and only if it holds with $C=C(p,d)$ given by
\es{\label{sharp-const}
C(p,d)=(p^{-1/2p}2^{1/p-1/4})^d.
}

\noindent The first to prove this conjecture for $(p,q,d)\in\{(6,6,1);(4,4,2)\}$ was Foschi \cite{Fo} in 2004. In 2005, Hundertmark and Zharnitsky \cite{HZ} gave an alternative proof for these two cases. Later on, in 2008, Carneiro \cite{Ca} and Bennett, Bez, Carbery and Hundertmark \cite{BBCH} gave alternative proofs for these cases, including in addition the new case $(p,q,d)=(4,8,1)$. All these proofs, although shedding new light at the problem via different angles, heavily rely on the crucial fact that $p=2k$ and $q=2k\ell$ for some integers $k\geq 2$ and $\ell\geq 1$. This vital property opens several doors to approach the problem (and we intend to open another one with this article), but the general case for non-even exponents still remains unsolved, although maximizers are known to exist (see \cite{Sh}).
\smallskip

The main goal of this paper is to demonstrate how orthogonal polynomials can be used to prove sharp space-time estimates related to the Schr\"odinger operator. The novelty of the present work lies in the use of techniques associated with the theory of orthogonal polynomials (in the sense of \cite[Chapter 2]{Sz}) to attack these problems and which ultimately allows us to: $(i)$ Prove a sharpened weighted inequality for the Schr\"odinger operator (Theorem \ref{radial-thm} and Corollary \ref{radial-cor}); $(ii)$ Develop a new way of attacking Conjecture \eqref{St-conj} (Theorems \ref{equiv-thm-hermite} and \ref{equiv-thm-laguerre}); $(iii)$ Produce alternative proofs for the fact that Gaussians maximize the Strichartz estimate \eqref{St-est} in the case of even exponents (Theorems \ref{L2-L2-estimate-hermite} and \ref{QS-op-thm} and Corollaries \ref{cor} and \ref{radial-St-sharp}). Moreover, for the case $(p,q,d)=(4,4,2)$ in \eqref{St-est}, we establish an interesting and unexpected connection with a combinatorial problem about counting words with four letters that dates back to the time of Strichartz's original work (see Appendix \ref{appendix}).

\section{Main Results}
The main abstract method underlying all the following results is to break the desired estimate into several simpler pieces, prove a sharp estimate for each piece and then obtain a sharp inequality for the full object. For the trained analyst, this strategy is most likely doomed to failure, but in our particular scenario the pieces are mutually orthogonal (and of finite dimension in most cases) and this allows us to avoid any loss of sharpness. This will become clear in the next sections.

\subsection{The Hermite Polynomial Approach}\label{hermite-case}
Let $\{H_m(x)\}_{m\geq 0}$ be the Hermite polynomials associated with the normal distribution $\d\ga(x)=(2\pi)^{-1/2}e^{-x^2/2}\dx$. In the sense of \cite[Chapters 2 and 5]{Sz}, these are the orthogonal polynomials associated with the measure $\d\ga(x)$ and normalized by the condition that each $H_m(x)$ is monic. For a given dimension $d$ and a given vector $\bm \in \Z^d_+$ ($\Z_+=\{0,1,2,...\}$) we write 
$$
H_{\bm} (\bx)=H_{m_1}(x_1){{...}} H_{m_d}(x_d),
$$ 
where $\bx=(x_1,{{...}},x_d)\in\R^d$. We also write 
$$
\d\ga_d(\bx)=(2\pi)^{-d/2}e^{-\|\bx\|^2/2}\d\bx,
$$
to denote the normal distribution in $\R^d$ where $\|\bx\|=\sqrt{x_1^2+{{...}}+x_d^2}$. 

It is known (see \cite[Chapter 5]{Sz}) that the multi-variate Hermite polynomials $\{H_{\bm} (\bx)\}$ form an orthogonal basis of $L^2(\R^d,\d\ga_d(\bx))$ and that
\est{%\label{norm-herm}
\int_{\R^d} |H_\bm(\bx)|^2\d\ga_d(\bx) = \bm!: = m_1!{{...}} m_d!.
}
As a consequence, it can be shown that the functions 
\es{\label{Phi-def}
\Phi_\bm(\bx) = H_\bm(\sqrt{4\pi}\,\bx)e^{-\pi\|\bx\|^2}
}
form an orthogonal basis of $L^2(\R^d,\d\bx)$. Thus, any function $f\in L^2(\R^d,\d\bx)$ can be uniquely written in the following form
\est{%\label{basis-expansion-Phi-L2}
f(\bx) = \sum_{\bm\in\Z^d_+} \alpha(\bm)\Phi_\bm(\bx).
}
We can now state our first main result. In what follows we define $|\bm|=m_1+m_2+{{...}}+m_d$, for any $\bm=(m_1,{...},m_d)\in \Z^d_+$.

\begin{theorem}\label{equiv-thm-hermite}
For any given $t\in\R$ define the following operator over $L^2(\R^d,\d\ga_d)$
\es{\label{herm-op-def}
\H^t:H_\bm(\bx)\mapsto e^{2\pi|\bm| it }H_\bm(\sqrt{2/p}\,\bx); \ \ \ \ \bx=(x_1,{{...}},x_d).
}
Let $2\leq p,q<\infty$ satisfy $\frac{d}{p}+\frac{2}{q}=\frac{d}{2}$ and let $f\in L^2(\R^d,\d\bx)$ have the following expansion $f(\bx)=\sum_{\bm\in\Z^d_+} \alpha(\bm)\Phi_\bm(\bx)$. Then we have
\est{%\label{St-est-semigroup}
\frac{\|\|e^{it\Delta}f(\bx)\|_{L^p(\d\bx)}\|_{L^q(\d t)}}{(p^{-1/2p}2^{1/p-1/2})^d}=\left(\int_{-1/2}^{1/2}\bigg(\int_{\R^d}|\H^t T_{-i}g(\bx)|^p\d\ga_d(\bx)\bigg)^{q/p}\dt\right)^{1/q}
}
and
\est{%\label{St-est-semigroup-2}
\frac{C(p,d)}{(p^{-1/2p}2^{1/p-1/2})^d}\|f(\bx)\|_{L^2(\d\bx)} = \bigg(\int_{\R^d}|g(\bx)|^2\d\ga_d(\bx)\bigg)^{1/2},
}
where $g(\bx)=\sum_{\bm\in\Z^d_+} \alpha(\bm)H_\bm(\bx)$, $C(p,d)$ is given in \eqref{sharp-const} and $T_{-i}$ is the operator defined in \eqref{T-om-op}.
\end{theorem}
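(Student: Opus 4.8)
The plan is to diagonalize the propagator on the orthogonal basis $\{\Phi_\bm\}$ by a Mehler-type (``lens transform'') identity, substitute the resulting expansion of $e^{it\Delta}f$ into the space--time norm, and exploit the exponent relation $\tfrac dp+\tfrac 2q=\tfrac d2$ to make every Jacobian weight coming from two changes of variables (one in $\bx$, one in $t$) cancel exactly. \textbf{Step 1.} Starting from the generating function of the monic Hermite polynomials, $\sum_{m\ge 0}H_m(x)\,z^m/m!=e^{xz-z^2/2}$, one gets
\[
\sum_{\bm\in\Z^d_+}\frac{\Phi_\bm(\bx)}{\bm!}\,\bz^{\bm}=\exp\!\Big(\sqrt{4\pi}\;\bx\cdot\bz-\tfrac12\|\bz\|^2-\pi\|\bx\|^2\Big),
\]
which, after completing the square in $\bx$, is a (complex) Gaussian. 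Since $e^{it\Delta}$ is explicit on Gaussians and commutes with translations (including complex ones, by analytic continuation), I apply it to both sides and re-expand in powers of $\bz$; comparing coefficients produces a closed formula of the shape
\[
e^{it\Delta}\Phi_\bm(\bx)=\lambda(t)^{-d/2}\,e^{i\pi\sin\psi\cos\psi\,\|\bx\|^2}\,e^{-i|\bm|\psi}\,\Phi_\bm(\cos\psi\,\bx),
\]
with $\lambda(t)=1+4\pi it$ and $\psi=\psi(t)\in(-\tfrac\pi2,\tfrac\pi2)$ fixed by $\tan\psi=4\pi t$ (the exact real coefficient of the chirp will be irrelevant; equivalently one may use $\widehat{\Phi_\bm}=(-i)^{|\bm|}\Phi_\bm$ and evaluate the Fourier integral directly). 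Thus, in the basis $\{\Phi_\bm\}$, the free evolution is (chirp multiplier)$\,\times\,$(dilation by $\cos\psi$)$\,\times\,$(the rotation $\Phi_\bm\mapsto e^{-i|\bm|\psi}\Phi_\bm$). Summing against $\alpha(\bm)$ and using $\Phi_\bm(\bx)=H_\bm(\sqrt{4\pi}\,\bx)e^{-\pi\|\bx\|^2}$,
\[
e^{it\Delta}f(\bx)=\lambda(t)^{-d/2}\,e^{i\pi\sin\psi\cos\psi\,\|\bx\|^2}\,e^{-\pi\cos^2\psi\,\|\bx\|^2}\,g_\psi\!\big(\sqrt{4\pi}\cos\psi\;\bx\big),\qquad g_\psi:=\sum_{\bm}\alpha(\bm)e^{-i|\bm|\psi}H_\bm .
\]

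\textbf{Step 2.} Taking moduli kills the chirp, and the substitution $\by=\sqrt{4\pi}\cos\psi\,\bx$ turns the surviving Gaussian into the $\psi$-independent weight $e^{-p\|\by\|^2/4}$:
\[
\int_{\R^d}|e^{it\Delta}f(\bx)|^p\,\d\bx=(4\pi)^{-d/2}\,(\cos\psi)^{\,d(p-2)/2}\int_{\R^d}e^{-p\|\by\|^2/4}\,|g_\psi(\by)|^p\,\d\by .
\]
Raising to the power $q/p$, integrating $\d t=\tfrac{1}{4\pi}\sec^2\psi\,\d\psi$ over $t\in\R$ (equivalently $\psi\in(-\tfrac\pi2,\tfrac\pi2)$), the total exponent of $\cos\psi$ multiplying the inner integral becomes $\tfrac{dq(p-2)}{2p}-2$, which vanishes \emph{precisely} because $\tfrac dp+\tfrac 2q=\tfrac d2$. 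Hence all $\psi$-weights disappear and the space--time norm equals a constant times $\big(\int_{-\pi/2}^{\pi/2}\big(\int_{\R^d}e^{-p\|\by\|^2/4}\,|g_\psi(\by)|^p\,\d\by\big)^{q/p}\d\psi\big)^{1/q}$. Because $e^{-i|\bm|(\psi+\pi)}H_\bm(\by)=e^{-i|\bm|\psi}H_\bm(-\by)$ and $e^{-p\|\by\|^2/4}\,\d\by$ is even, the inner integral is $\pi$-periodic in $\psi$; so after $\psi=2\pi t$, using this periodicity to replace the integral over one period by half the integral over $[-\tfrac12,\tfrac12]$, the norm equals a constant times $\big(\int_{-1/2}^{1/2}\big(\int_{\R^d}e^{-p\|\by\|^2/4}\,\big|\sum_\bm\alpha(\bm)e^{2\pi i|\bm|t}H_\bm(\by)\big|^p\,\d\by\big)^{q/p}\d t\big)^{1/q}$.

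\textbf{Step 3.} Finally, the rescaling $\by=\sqrt{2/p}\,\bx$ turns $e^{-p\|\by\|^2/4}\,\d\by$ into $(2/p)^{d/2}(2\pi)^{d/2}\,\d\ga_d(\bx)$ and $H_\bm(\by)$ into $H_\bm(\sqrt{2/p}\,\bx)$, which is exactly the dilation built into $\H^t$ in \eqref{herm-op-def}; unwinding the definitions of $\H^t$ and $T_{-i}$ in \eqref{herm-op-def}--\eqref{T-om-op} (recall $g=\sum_\bm\alpha(\bm)H_\bm$ carries the same coefficients as $f$) identifies the inner integral with $\int_{\R^d}|\H^t T_{-i}g(\bx)|^p\,\d\ga_d(\bx)$. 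It then only remains to collect the accumulated multiplicative constant --- the $(4\pi)^{-d/2}$ from Step 2, the Jacobian factors $\tfrac{1}{4\pi}$ and $2\pi$, the factor $\tfrac12$ from the period comparison, and $(2/p)^{d/2}(2\pi)^{d/2}$ --- and to check, using $\tfrac dp+\tfrac 2q=\tfrac d2$ once more, that it collapses to $(p^{-1/2p}2^{1/p-1/2})^{dq}$, as claimed. The second displayed identity is just Parseval in the two orthogonal bases: $\|f\|_{L^2(\d\bx)}^2=\sum_\bm|\alpha(\bm)|^2\|\Phi_\bm\|_{L^2(\d\bx)}^2=2^{-d/2}\sum_\bm|\alpha(\bm)|^2\,\bm!$ whereas $\|g\|_{L^2(\d\ga_d)}^2=\sum_\bm|\alpha(\bm)|^2\,\bm!$, together with the elementary $C(p,d)/(p^{-1/2p}2^{1/p-1/2})^d=2^{d/4}$.

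The main obstacle is Step 1 together with the constant bookkeeping in Step 3: deriving the lens-transform formula requires tracking the correct branches of $\lambda(t)^{-d/2}$ and of the square roots that appear when re-expanding the Gaussian in powers of $\bz$, and then one must verify that the exponent relation makes \emph{all} powers of $\cos\psi$ and all multiplicative constants cancel --- a computation that is routine in principle but unforgiving, and in which the role of $T_{-i}$ in matching the $\d\ga_d$-weighted integral has to be handled with care. One should also justify the termwise action of $e^{it\Delta}$ and the interchange of summation and integration; this is immediate for finite expansions, and the general case follows by density, the unitarity of $e^{it\Delta}$ on $L^2$, and the a priori (non-sharp) Strichartz bound for the left-hand side.
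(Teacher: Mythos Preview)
Your proposal is correct and follows essentially the same route as the paper: compute $e^{it\Delta}\Phi_\bm$ explicitly (the paper does this via integral tables in its Lemma, you via the generating function --- the formulas agree once one writes $\sqrt{(1-4\pi it)/(1+4\pi it)}=e^{-i\psi}$ with $\tan\psi=4\pi t$), then change variables in $\bx$ and in $t$ so that the exponent relation $\tfrac dp+\tfrac 2q=\tfrac d2$ kills the $\cos\psi$ weights, and finally use the parity $H_\bm(-\by)=(-1)^{|\bm|}H_\bm(\by)$ to reconcile the half--period interval with the full $t\in[-\tfrac12,\tfrac12]$ integral.

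One sentence needs tightening. In Step~3 you say the rescaling ``identifies the inner integral with $\int_{\R^d}|\H^t T_{-i}g|^p\,\d\ga_d$''. That is not true pointwise in $t$: what you actually obtain is $\int_{\R^d}|\H^{t}g|^p\,\d\ga_d$ (up to $t\mapsto -t$). The operator $T_{-i}$ only enters because $\H^t T_{-i}=\H^{t-1/4}$ and the map $t\mapsto \|\H^t g\|_{L^p(\d\ga_d)}$ is $1$-periodic, so the \emph{outer} $t$-integrals coincide; the paper makes exactly this point in its Step~2 via the identities $\H^{t+1/4}T_{-i}=\H^t$ and $\H^{t-1/4}T_{-i}=T_{-1}\H^t$. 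With that clarification your constant bookkeeping is correct and matches the paper's $(p^{-qd/(2p)}/4)^{1/q}=(p^{-1/(2p)}2^{1/p-1/2})^d$.
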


\begin{corollary}
Gaussians maximize the Strichartz estimate \eqref{St-est} if and only if
\es{\label{equiv-est-hermite}
\left(\int_{-1/2}^{1/2}\bigg(\int_{\R^d}|\H^t g(\bx)|^p\d\ga_d(\bx)\bigg)^{q/p}\dt\right)^{1/q}\leq \bigg(\int_{\R^d}|g(\bx)|^2\d\ga_d(\bx)\bigg)^{1/2},
}
for all $g\in L^2(\R^d,\d\ga_d)$.
\end{corollary}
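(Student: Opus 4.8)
The plan is to read the corollary off from Theorem~\ref{equiv-thm-hermite} together with the Remark following Conjecture~\ref{St-conj}. That Remark tells us that ``Gaussians maximize \eqref{St-est}'' is equivalent to the statement that \eqref{St-est} holds with the explicit constant $C=C(p,d)$ of \eqref{sharp-const}, i.e.
\[
\|\|e^{it\Delta}f(\bx)\|_{L^p(\d\bx)}\|_{L^q(\d t)}\le C(p,d)\,\|f(\bx)\|_{L^2(\d\bx)}\qquad\text{for every }f\in L^2(\R^d,\d\bx).
\]
I would divide this by the positive number $(p^{-1/2p}2^{1/p-1/2})^d$ and invoke the two identities of Theorem~\ref{equiv-thm-hermite}; the inequality then reads
\[
\left(\int_{-1/2}^{1/2}\Big(\int_{\R^d}|\H^t T_{-i}g(\bx)|^p\d\ga_d(\bx)\Big)^{q/p}\dt\right)^{1/q}\le\Big(\int_{\R^d}|g(\bx)|^2\d\ga_d(\bx)\Big)^{1/2},
\]
where $g=\sum_{\bm\in\Z^d_+}\alpha(\bm)H_\bm$ is the function attached by the theorem to $f=\sum_{\bm\in\Z^d_+}\alpha(\bm)\Phi_\bm$. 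It then remains to see that, letting $f$ range over $L^2(\R^d,\d\bx)$, this family of inequalities coincides with \eqref{equiv-est-hermite} as $g$ ranges over $L^2(\R^d,\d\ga_d)$.

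This rests on two bookkeeping points. First, the assignment $f\mapsto g$ is a bijection of $L^2(\R^d,\d\bx)$ onto $L^2(\R^d,\d\ga_d)$: the second identity of Theorem~\ref{equiv-thm-hermite} says the two norms differ only by a fixed multiplicative constant, and since $\{\Phi_\bm\}$ and $\{H_\bm\}$ are orthogonal bases of these two spaces assembled from the same coefficient sequence $\{\alpha(\bm)\}$, finiteness of $\|f\|_{L^2(\d\bx)}$ is equivalent to finiteness of $\|g\|_{L^2(\d\ga_d)}$ and every $g\in L^2(\R^d,\d\ga_d)$ occurs. Second, $T_{-i}$ is a surjective isometry of $L^2(\R^d,\d\ga_d)$; this should be immediate from its definition \eqref{T-om-op}: for $|\omega|=1$ the operator $T_\omega$ acts on the orthogonal basis $\{H_\bm\}$ by the unimodular multipliers $\omega^{|\bm|}$. (Equivalently $\H^t T_{-i}=\H^{t-1/4}$, so the step below could instead be carried out by using the $1$-periodicity of $\H^t$ in $t$ and translating the interval $[-1/2,1/2]$ back onto itself.)

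Granting these, the conclusion is purely formal. For the ``only if'' part, if Gaussians maximize then the displayed inequality above holds for every $g$; given an arbitrary $h\in L^2(\R^d,\d\ga_d)$ we write $h=T_{-i}g$ with $\|g\|_{L^2(\d\ga_d)}=\|h\|_{L^2(\d\ga_d)}$, and the inequality for $g$ becomes exactly \eqref{equiv-est-hermite} for $h$. For the ``if'' part we reverse this: assuming \eqref{equiv-est-hermite}, apply it to $T_{-i}g$ in place of $g$ and use $\|T_{-i}g\|_{L^2(\d\ga_d)}=\|g\|_{L^2(\d\ga_d)}$ to recover the displayed inequality for every $g$, hence \eqref{St-est} with constant $C(p,d)$, hence (again by the Remark) that Gaussians maximize. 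The one step that genuinely needs care is the bijectivity bookkeeping of the second paragraph: one must be sure that no admissible $f$, $g$ or $h$ is lost under these substitutions, since it is exactly that which makes the equivalence two-sided with no loss of sharpness.
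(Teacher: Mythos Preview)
Your proof is correct and follows the same route as the paper, which simply remarks that the corollary follows from Theorem~\ref{equiv-thm-hermite} because $T_{-i}$ is isometric and invertible on $L^2(\R^d,\d\ga_d)$. Your parenthetical observation that $\H^tT_{-i}=\H^{t-1/4}$ together with the $1$-periodicity of $t\mapsto\H^t$ is a clean alternative way to dispose of the $T_{-i}$, and your explicit handling of the bijection $f\leftrightarrow g$ makes precise what the paper leaves implicit.
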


\noindent {\bf Remarks.}
\begin{enumerate}
\item Since the operator $T_{-i}$ is isometric and invertible on $L^2(\R^d,\d\ga_d)$, the corollary above easily follows from Theorem \ref{equiv-thm-hermite}. The factor $e^{2\pi|\bm| it }$ in the definition of the operator $\H^t$ introduces the possibility of using all the machinery from Fourier series to prove Conjecture \ref{St-conj} for all exponents. Unfortunately, the author was not able to achieve any  satisfactory result with the mentioned approach for non-even exponents. Nevertheless, we were able to exploit this approach in the known cases of even exponents and to give a new way to understand Conjecture \ref{St-conj}.

\item Theorem \ref{equiv-thm-hermite} was inspired by Beckner's approach for the sharp Hausdorff-Young inequality (see \cite{Be}). Roughly speaking, the heart of Beckner's proof relies on an application of the Central Limit Theorem to approximate the Hermite semi-group operator $H_n\mapsto \om^n H_n$ by normalized tensor products of a discrete version of the same operator in the two-point space. This strategy was generalized by the present author in \cite{Gon2}, where we showed that not only the Hermite semi-group can be approximated, but any operator given by a Gaussian kernel can be approximated by tensor products of any operator (not only Beckner's discrete operator in the two-point space) satisfying the right compatibility conditions. In the eyes of the author, the challenge presented by Conjecture \ref{St-conj}, within this framework, is to find a special way of discretizing the time variable.   
\end{enumerate}

As explained in the introduction, the known cases where the Strichartz inequality \eqref{St-est} is maximized by Gaussians are $(p,q,d)\in \{(6,6,1);(4,8,1);(4,4,2)\}$ and they all share the following property: $p=2k$ and $q=2k\ell$, for positive integers $k\geq 2$ and $\ell\geq 1$. In these situations, this relation allows us to use Theorem \ref{equiv-thm-hermite} and the orthogonality of Fourier series to transform the problem into an $(\ell^2\to \ell^2)$--estimate on a suitable space of sequences indexed by certain matrices. In what follows, we define the required mathematical objects and spaces for any given $k\geq 2$ and $\ell\geq 1$ and we explicitly calculate the resulting operators that naturally emerge when one tries to compute the left hand side of inequality \eqref{equiv-est-hermite}.

Let $g(\bx)=\sum_{\bm\in\Z^d_+} \alpha(\bm)H_\bm(\bx)$ be a function in $L^2(\R^d,\d\ga_d)$. Let $I=(-1/2,1/2)$ and $\la=\sqrt{2/p}=\sqrt{1/k}$. We have
\es{\label{gen-ident-hermite}
&\int_I\bigg(\int_{\R^d} |\H^tg(\bx)|^{2k}\d\ga_d(\bx)\bigg)^\ell\dt \\ 
& = \int_I\bigg(\int_{\R^d} \bigg| \sum_{\bm^1,...,\bm^k\in\Z_+^d} \prod_{j=1}^k \alpha(\bm^{j})   H_{\bm^j}(\la \bx) e^{2\pi (|\bm^1|+...+|\bm^k|) it }\bigg|^2 \d\ga_d(\bx)\bigg)^\ell \dt \\
& = \int_I\int_{(\R^d)^\ell} \bigg| \sum_{S\geq 0}\sum_{\stackrel{\sum_{i,j}|\bm^{i,j}|=S}{\bm^{i,j}\in\Z_+^d}} \prod_{i=1}^\ell \prod_{j=1}^k \alpha(\bm^{i,j})   H_{\bm^{i,j}}(\la \bx^i) e^{2\pi S it }\bigg|^2 \d\ga_d(\bx^1){...}\d\ga_d(\bx^\ell) \dt \\
& = \sum_{S\geq 0}\int_{(\R^d)^\ell} \bigg|  \sum_{\stackrel{\sum_{i,j}|\bm^{i,j}|=S}{\bm^{i,j}\in\Z_+^d}} \prod_{i=1}^\ell \prod_{j=1}^k \alpha(\bm^{i,j})   H_{\bm^{i,j}}(\la \bx^i)\bigg|^2 \d\ga_d(\bx^1){...}\d\ga_d(\bx^\ell) \\ 
& = \sum^*\prod_{i=1}^\ell \prod_{j=1}^k \alpha(\bm^{i,j})\ov{\alpha(\bn^{i,j})}\int_{(\R^d)^\ell} \prod_{i=1}^\ell \prod_{j=1}^k H_{\bm^{i,j}}(\la \bx^i)H_{\bn^{i,j}}(\la \bx^i)\d\ga_d(\bx^1){...}\d\ga_d(\bx^\ell),
}
where the summation $\sum^*$ is taken over all $\bm^{i,j},\bn^{i,j}\in\Z^d_+$ for $i=1,{{...}},\ell$ and $j=1,...,k$ such that $\sum_{i,j}|\bm^{i,j}| = \sum_{i,j}|\bn^{i,j}|$. The third identity above is due to the orthogonality of Fourier series $\{e^{2\pi iSt}\}_{S\in\Z}$. In an analogous way, we also have
\es{\label{gen-ident-hermite-2}
\bigg(\int_{\R^d}|g(\bx)|^2 \d\ga_d(\bx)\bigg)^{\ell k} = \sum_{\bm^{i,j}\in\Z^d_+} \prod_{i=1}^\ell \prod_{j=1}^k |\alpha(\bm^{i,j})|^2 M!,
}
where $M!=\prod_{i=1}^\ell \prod_{j=1}^k \bm^{i,j}!$ (recall that $(m_1,{{...}},m_d)!:= m_1!{{...}}m_d!$).

To be able to clearly analyze the resulting operator that appears in the last line of \eqref{gen-ident-hermite} we need to make some definitions first. Define $\M=\M^{\ell,k}_d$ as the space of $\ell\times k$ matrices $M=[\bm^{i,j}]$ for $i=1,{{...}},\ell$ and $j=1,{{...}},k$ where each entry is a vector $\bm^{i,j}\in\Z_+^d$. Next, define $\F=\F^{\ell,k}_d$ as the space of functions $\p:\M \to \C$ such that 
$$
\sum_{M\in\M} |\p(M)|^2M! < \infty.
$$
The space $\F$ is a Hilbert space of sequences indexed by the matrices in $\M$ and endowed with the following inner product
\es{\label{inner-prod-LL-hermite}
\langle \p,\psi \rangle_\F = \sum_{M\in\M} \p(M)\ov{\psi(M)}M!.
}
We define an operator $\PP=\PP^{\ell,k}_d$ that maps a function $\p:\M\to\C$ into a function $\psi:\M\to\C$ 
by
\es{\label{operator-P-hermite}
\psi(M)=\PP\p(M) = \sum_{\stackrel{N\in\M}{|N|=|M|}} \p(N
) \frac{P(M,N)}{M!},
}
where $M=[\bm^{i,j}]\in \M$ and $|M|=\sum_{i,j} |\bm^{i,j}|$. The coefficients above are given by
\es{\label{P-coeff}
P(M,N) & =  \int_{(\R^d)^\ell} \bigg\{\prod_{i=1}^\ell\prod_{j=1}^kH_{\bm^{i,j}}(\la \bx^i)H_{\bn^{i,j}}(\la \bx^i)\bigg\}\d\ga_d(\bx^1){{...}}\d\ga_d(\bx^\ell)
}
for all $M=[\bm^{i,j}]$ and $N=[\bn^{i,j}]$. For each $S\geq 0$, let $\F_S$ denote the closed subspace of functions $\p:\M\to\C$ such that $\p(M)=0$ for any matrix $M\in\M$ such that $|M|\neq S$. Note that $\dim (\F_S)<\infty$ and $\PP(\F_S)\subset \F_S$. Also, the spaces $\F_S$ are orthogonal to each other with respect to the inner product \eqref{inner-prod-LL-hermite} and 
$$
\F=\ov{\bigoplus_{S\geq 0} \F_S}.
$$ 

Our next main result concerns the operator $\PP$, its norm over the space $\F$ and the its relation with estimate \eqref{equiv-est-hermite}. Since the coefficients $P(M,N)$ are real, clearly $\PP$ is always symmetric. Depending on the exponents $\ell,k$ and $d$, the operator $\PP$ may be unbounded (hence, not defined in the whole $\F$). It is the goal of our next result to give a full description of these scenarios.

\begin{theorem}\label{L2-L2-estimate-hermite}
Let $\ell\geq 1$, $k\geq 2$ and $d\geq 1$ be integers and consider the operator $\PP$ defined in \eqref{operator-P-hermite} acting on the space $\F$. Then
\begin{equation}\label{Integral-id-hermite}
\left\{
\begin{array}{lc}
\displaystyle\int_I\bigg(\int_{\R^d} |\H^tg(\bx)|^{2k}\d\ga_d(\bx)\bigg)^\ell\dt  = \langle \p,\PP\p\rangle_{\F} \\ 
\bigg(\int_{\R^d} |g(\bx)|^2 \d\ga_d(\bx)\bigg)^{k\ell}  = \|\p\|^2_\F,
\end{array}
\right.
\end{equation}
if $g(\bx)=\sum_{\bm} \alpha(\bm)H_\bm(\bx)$  and if $\p:\M\to\C$ is defined by $\p(M)=\prod_{i,j} \alpha(\bm^{i,j})$ for any $M=[\bm^{i,j}]\in\M$. Let $S\geq 0$ be an integer and let $\PP_S$ be the restriction to $\F_S$ of the operator $\PP$ $($recall that $\PP(\F_S)\subset \F_S$\nonumber$)$. Then $\PP_S^2=\PP_S$ $($hence $\PP_S$ is a projection$)$ if and only if $(k-1)\ell d/2=1$. In this case, $\PP$ is well-defined in the whole $\F$ and it is also a projection. In general, if $\mu:=(k-1)\ell d/2>1$ we have
\est{
\|\PP_S\|_{\F_S\to\F_S}=\frac{\mu (\mu+1){{...}}(\mu+\lfloor S/2\rfloor-1)}{\lfloor S/2\rfloor!} \sim \frac{S^{\mu-1}}{2^{\mu-1}\Gamma(\mu)}, \ \ S\to\infty.
}
In particular, $\PP$ is not bounded in $\F$ for $\mu>1$.
\end{theorem}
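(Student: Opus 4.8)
The plan is to realise $\PP_S$ as $T_S^{*}T_S$ for an explicit operator $T_S$ into a Gaussian $L^2$--space, and then to diagonalise $T_S T_S^{*}$ by collecting the whole family of operators $\{R_S\}_{S\geq0}$ into a single generating function and invoking Mehler's formula.

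The identities \eqref{Integral-id-hermite} are immediate: the first is the last line of \eqref{gen-ident-hermite} rewritten through \eqref{inner-prod-LL-hermite}, \eqref{operator-P-hermite}, \eqref{P-coeff} together with the reality and symmetry of $P(M,N)$, and the second is \eqref{gen-ident-hermite-2}. For the rest, identify $(\R^d)^\ell$ with $\R^{\ell d}$, so that $\d\ga_d(\bx^1)\cdots\d\ga_d(\bx^\ell)=\d\ga_{\ell d}$, and for $M=[\bm^{i,j}]\in\M$ set $\Psi_M(\bx^1,\dots,\bx^\ell):=\prod_{i=1}^{\ell}\prod_{j=1}^{k}H_{\bm^{i,j}}(\la\bx^i)$; then by \eqref{P-coeff} one has $P(M,N)=\int_{\R^{\ell d}}\Psi_M\Psi_N\,\d\ga_{\ell d}$. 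For fixed $S\geq0$ define $T_S\colon\F_S\to L^2(\R^{\ell d},\ga_{\ell d})$ by $T_S\p=\sum_{|M|=S}\p(M)\Psi_M$. A short computation with \eqref{inner-prod-LL-hermite} and the symmetry of $P$ gives $\langle\PP_S\p,\psi\rangle_{\F}=\langle T_S\p,T_S\psi\rangle_{L^2(\ga_{\ell d})}$ for all $\p,\psi\in\F_S$; hence $\PP_S=T_S^{*}T_S$ (adjoint taken with respect to these two inner products), with $(T_S^{*}u)(M)=\langle u,\Psi_M\rangle/M!$. Therefore $\PP_S$ and $R_S:=T_ST_S^{*}=\sum_{|M|=S}\tfrac1{M!}\langle\,\cdot\,,\Psi_M\rangle\,\Psi_M$ are self-adjoint operators with the same nonzero spectrum, so $\|\PP_S\|_{\F_S\to\F_S}=\|R_S\|$, and $\PP_S^{2}=\PP_S$ if and only if every nonzero eigenvalue of $R_S$ equals $1$.

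Now I diagonalise $R_S$. Its kernel against $\d\ga_{\ell d}$ is $\sum_{|M|=S}\Psi_M(\bx)\Psi_M(\by)/M!$, so, writing $x^i_r$ for the $r$-th coordinate of $\bx^i$ and factoring over the triples $(i,j,r)$,
\est{
\sum_{S\geq0}z^{S}K_{R_S}(\bx,\by) &=\sum_{M\in\M}\frac{z^{|M|}}{M!}\,\Psi_M(\bx)\,\Psi_M(\by) \\
&=\prod_{i=1}^{\ell}\prod_{j=1}^{k}\prod_{r=1}^{d}\ \sum_{n\geq0}\frac{z^{n}}{n!}\,H_{n}(\la x^{i}_{r})\,H_{n}(\la y^{i}_{r}).
}
By Mehler's formula for the monic Hermite polynomials, $\sum_{n\geq0}\tfrac{z^n}{n!}H_n(u)H_n(v)=(1-z^2)^{-1/2}\exp\!\big(\tfrac{2uvz-(u^2+v^2)z^2}{2(1-z^2)}\big)$; substituting $u=\la x^i_r$, $v=\la y^i_r$, forming the $\ell kd$--fold product and using the hypothesis $k\la^2=1$ to collapse the product over $j$, the right-hand side becomes $(1-z^2)^{-\mu}$ times the $\ell d$--dimensional Mehler kernel, i.e.\ the kernel against $\d\ga_{\ell d}$ of the Ornstein--Uhlenbeck operator $U_z\colon H_\bn\mapsto z^{|\bn|}H_\bn$ on $L^2(\R^{\ell d},\ga_{\ell d})$, where $\mu=(k-1)\ell d/2$. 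Expanding $(1-z^2)^{-\mu}=\sum_{p\geq0}\binom{\mu+p-1}{p}z^{2p}$ and comparing coefficients of $z^{S}$ --- legitimate because for each fixed $S$ only finitely many $M$ satisfy $|M|=S$, so $R_S$ is finite-rank and every series here is a finite sum --- gives
\est{
R_S=\sum_{p=0}^{\lfloor S/2\rfloor}\binom{\mu+p-1}{p}\,\Pi_{S-2p},
}
where $\Pi_n$ is the orthogonal projection of $L^2(\R^{\ell d},\ga_{\ell d})$ onto the span of the degree-$n$ multivariate Hermite functions. Since the $\Pi_n$ are mutually orthogonal projections, the nonzero eigenvalues of $R_S$ are exactly the numbers $\binom{\mu+p-1}{p}$, $0\leq p\leq\lfloor S/2\rfloor$.

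The conclusions follow. Each $\binom{\mu+p-1}{p}=\mu(\mu+1)\cdots(\mu+p-1)/p!$ is positive, and $\binom{\mu+p-1}{p}\big/\binom{\mu+p-2}{p-1}=(\mu+p-1)/p$, so for $S\geq2$ all these eigenvalues equal $1$ --- equivalently $\PP_S^{2}=\PP_S$ --- if and only if $\mu=1$ (the cases $S\in\{0,1\}$ being degenerate). For $\mu=1$ one gets $R_S=\sum_p\Pi_{S-2p}$, the orthogonal projection onto $\mathrm{range}\,T_S$, hence $\PP_S^{2}=T_S^{*}(T_ST_S^{*})T_S=T_S^{*}T_S=\PP_S$ and $\|\PP_S\|=1$ for every $S$, so $\PP=\bigoplus_S\PP_S$ is a well-defined norm-one projection on $\F$. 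For $\mu>1$ the sequence $p\mapsto\binom{\mu+p-1}{p}$ is strictly increasing, so $\|\PP_S\|=\|R_S\|=\binom{\mu+\lfloor S/2\rfloor-1}{\lfloor S/2\rfloor}=\mu(\mu+1)\cdots(\mu+\lfloor S/2\rfloor-1)/\lfloor S/2\rfloor!$; writing this as $\Gamma(\mu+\lfloor S/2\rfloor)\big/\big(\Gamma(\mu)\Gamma(\lfloor S/2\rfloor+1)\big)$ and using $\Gamma(m+\mu)/\Gamma(m+1)\sim m^{\mu-1}$ as $m\to\infty$ yields $\|\PP_S\|\sim\lfloor S/2\rfloor^{\mu-1}/\Gamma(\mu)\sim S^{\mu-1}/(2^{\mu-1}\Gamma(\mu))$, and in particular $\|\PP\|\geq\sup_S\|\PP_S\|=\infty$. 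The delicate point is the bookkeeping in the Mehler step --- tracking the powers of $(1-z^2)$ when passing from the $\ell kd$--fold product of one-dimensional Mehler kernels at the rescaled arguments $\la x^i_r$ to the $\ell d$--dimensional Mehler kernel --- since that is exactly where the normalisation $\la^2=1/k$ enters and manufactures the shift parameter $\mu$.
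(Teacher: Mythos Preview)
Your argument is correct and shares the same computational core as the paper's proof---namely, the Mehler-kernel generating-function identity showing that the integral kernel
\[
K_S(\bx^1,\dots,\bx^\ell;\by^1,\dots,\by^\ell)=\sum_{|M|=S}\frac{\Psi_M(\bx)\Psi_M(\by)}{M!}
\]
decomposes as $\sum_{p=0}^{\lfloor S/2\rfloor}\tfrac{(\mu)_p}{p!}\,K_{\Pi_{S-2p}}$ (this is the paper's \eqref{KS-alt-form}). Where the two arguments diverge is in how this decomposition is turned into information about $\PP_S$. The paper observes that the $n$-fold iterate $\PP_S^n$ has coefficients $P^{(n)}(M,L)=\langle H_M,\K_S^{n-1}[H_L]\rangle$, where $\K_S$ is the integral operator with kernel $K_S$; it then uses the diagonalisation of $\K_S$ together with a parity observation about the Hermite expansion of $H_M$ to show $\K_S[H_M]=H_M$ when $\mu=1$ (whence $\PP_S^2=\PP_S$), and for $\mu>1$ invokes an auxiliary spectral lemma (Lemma~\ref{norm-lemma}) on the growth of $|\langle\PP_S^n\bu,\bv\rangle|$, followed by an explicit construction of a matrix $M_0$ witnessing the lower bound. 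Your route is more economical: recognising $\PP_S=T_S^{*}T_S$ and $R_S=\K_S=T_S T_S^{*}$ lets you read off the entire nonzero spectrum of $\PP_S$ directly from the diagonalisation of $R_S$, with no need for the iteration, the auxiliary lemma, or the explicit witness. The paper's approach, on the other hand, makes the structure of the iterated coefficients $P^{(n)}$ more visible, which could be useful elsewhere. Your remark that the cases $S\in\{0,1\}$ are degenerate (since then $p=0$ is the only index and $\PP_S$ is automatically a projection regardless of $\mu$) is a fair reading of the theorem, and in fact your argument gives the ``only if'' direction for all $\mu\neq 1$ (including the pathological $\mu<1$ case the paper sets aside).
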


\noindent {\bf Remark.} 
For $k\geq 2$, the only case where $0< \mu<1$ is $(k,\ell,d)=(2,1,1)$. This is a pathological case that appears due to the presence of probability measures and the possibility of using Jensen's inequality, and that is why we exclude it from our analysis. Considering the situation $p=2k$ and $q=2k\ell$, where $k\geq 2$ and $\ell\geq 1$ are integers, the three cases where Conjecture \ref{St-conj} is known to be true $(p,q,d)\in\{(6,6,1);(4,8,1);(4,4,2)\}$ match exactly with those where $\mu=(k-1)\ell d/2=1$.

\begin{corollary}\label{cor}
Gaussians maximize the Strichartz inequality \eqref{St-est} for $(p,q,d)\in\{(6,6,1);(4,8,1);(4,4,2)\}$.
\end{corollary}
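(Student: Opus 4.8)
The plan is to deduce this from Theorem \ref{L2-L2-estimate-hermite} together with the Corollary following Theorem \ref{equiv-thm-hermite}. First I would check the arithmetic: in each of the three cases we have $p=2k$ and $q=2k\ell$ with $(k,\ell,d)$ equal to $(3,1,1)$, $(2,2,1)$ and $(2,1,2)$ respectively, and in all three $\mu=(k-1)\ell d/2 = 1$. By the middle assertion of Theorem \ref{L2-L2-estimate-hermite}, this is exactly the regime where each restriction $\PP_S$ is a projection on $\F_S$, hence $\PP$ is a well-defined bounded self-adjoint projection on all of $\F$, so $\|\PP\|_{\F\to\F}=1$.

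Next I would translate this operator bound back into the Strichartz setting. Given arbitrary $g\in L^2(\R^d,\d\ga_d)$ with expansion $g(\bx)=\sum_\bm \alpha(\bm)H_\bm(\bx)$, define $\p:\M\to\C$ by $\p(M)=\prod_{i,j}\alpha(\bm^{i,j})$ for $M=[\bm^{i,j}]$. The two identities in \eqref{Integral-id-hermite} give
\est{
\int_I\bigg(\int_{\R^d}|\H^t g(\bx)|^{2k}\d\ga_d(\bx)\bigg)^\ell\dt = \langle\p,\PP\p\rangle_\F \leq \|\PP\|_{\F\to\F}\,\|\p\|_\F^2 = \bigg(\int_{\R^d}|g(\bx)|^2\d\ga_d(\bx)\bigg)^{k\ell}.
}
Taking the $2k\ell$-th root of both sides (and recalling $p=2k$, $q=2k\ell$, $I=(-1/2,1/2)$) yields precisely inequality \eqref{equiv-est-hermite} for this $(p,q,d)$. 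By the Corollary to Theorem \ref{equiv-thm-hermite}, the validity of \eqref{equiv-est-hermite} for all $g\in L^2(\R^d,\d\ga_d)$ is equivalent to Gaussians maximizing \eqref{St-est}, which is the claim.

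There is essentially no obstacle here: the corollary is a bookkeeping consequence of the two preceding results. The only points requiring a word of care are (a) confirming that the $\mu=1$ condition holds in all three listed cases and in no case reduces to the excluded pathological situation $0<\mu<1$ (which needs $k=1$ or $(k,\ell,d)=(2,1,1)$, neither of which occurs), and (b) noting that the operator identities in \eqref{Integral-id-hermite} were derived for the specific diagonal-type $\p(M)=\prod_{i,j}\alpha(\bm^{i,j})$, so the inequality $\langle\p,\PP\p\rangle_\F\le\|\p\|_\F^2$ must be applied to that particular $\p$ — but since $\|\PP\|_{\F\to\F}=1$ is a bound over the entire space $\F$, this restriction costs nothing. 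All the genuine analytic content has already been spent in proving Theorem \ref{L2-L2-estimate-hermite}.
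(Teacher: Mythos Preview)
Your proposal is correct and is precisely the argument the paper intends: the corollary is left without proof in the text, but the preceding Remark makes clear it follows exactly as you describe, by checking $\mu=1$ in each case, invoking the projection statement of Theorem \ref{L2-L2-estimate-hermite} to get $\langle\p,\PP\p\rangle_\F\le\|\p\|_\F^2$, and then passing through \eqref{Integral-id-hermite} and the Corollary to Theorem \ref{equiv-thm-hermite}.
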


\subsection{The Laguerre Polynomial Approach}%\label{laguerre-case}
For any $\nu>-1$ we denote by $\{L_n^{(\nu)}(x)\}_{n\geq 0}$ the generalized Laguerre polynomials associated with the parameter $\nu$. In the sense of \cite[Chapters 2 and 5]{Sz}, these are the orthogonal polynomials associated with the measure $e^{-x}x^\nu \dx$ ($x>0$) and normalized by the condition
$$
\int_0^\infty |L_n^{(\nu)}(x)|^2 e^{-x}x^\nu \dx = \frac{\Gamma(n+\nu+1)}{n!}.
$$
They are also known to form an orthogonal basis in the space $L^2(\R_+,e^{-x}x^\nu \dx)$. In analogy with the Hermite polynomials, it can be shown that for a given dimension $d$ the functions
\es{\label{Psi-def}
\Psi_n(\bx) = L_n^{(\nu)}(2\pi \|\bx\|^2) e^{-\pi\|\bx\|^2},
} 
with $\nu=d/2-1$ form an orthogonal basis of the space of radial functions in $L^2(\R^d,\d\bx)$. Thus, any radial function $f\in L^2(\R^d,\d\bx)$ can be  uniquely written in the form
$$
f(\bx) = \sum_{n\geq 0} \alpha(n)\Psi_n(\bx).
$$

Our next main result shows an analogue of Theorem \ref{equiv-thm-hermite} but only for radial initial data.

\begin{theorem}\label{equiv-thm-laguerre}
Let $d$ be a given dimension and set $\nu=d/2-1$. For any given $t\in\R$ define the following operator on $L^2\left(\R_+, \frac{x^{\nu}e^{-x}}{\Gamma(\nu+1)}\dx\right)$
\est{%\label{op-Ct}
\LL^t:L_n^{(\nu)}(x)\mapsto e^{2\pi n it }L_n^{(\nu)}\big(\tfrac{2}{p}\,x\big).
}
Let $2\leq p,q<\infty $ satisfy $\frac{d}{p}+\frac{2}{q}=\frac{d}{2}$ and let $f\in L^2(\R^d,\d\bx)$ be a radial function having the following expansion $f(\bx)=\sum_{n\geq 0} \alpha(n)\Psi_n(\bx)$. Then we have
\es{\label{St-est-semigroup-laguerre}
\frac{\|\|e^{it\Delta}f(\bx)\|_{L^p(\d\bx)}\|_{L^q(\d t)}}{(p^{-1/2p}2^{1/p-1/2})^d}=\left(\int_{-1/2}^{1/2}\bigg(\int_0^\infty|\LL^tg(x)|^p \frac{e^{-x}x^\nu}{\Gamma(\nu+1)}\bigg)^{q/p}\dt\right)^{1/q}
}
and
\est{%\label{St-est-semigroup-laguerre-2}
\frac{C(p,d)}{(p^{-1/2p}2^{1/p-1/2})^d}\|f(\bx)\|_{L^2(\d\bx)} = \bigg(\int_0^\infty|g(x)|^2\frac{e^{-x}x^\nu}{\Gamma(\nu+1)}\d x\bigg)^{1/2},
}
where $g(\bx)=\sum_{n\geq 0} \alpha(n)L^{(\nu)}_n(x)$ and $C(p,d)$ is given in \eqref{sharp-const}.
\end{theorem}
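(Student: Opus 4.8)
The plan is to follow the architecture of the proof of Theorem~\ref{equiv-thm-hermite}, but with the Laguerre generating function in place of the Hermite one and with everything expressed through the radial variable. For radial $f=\sum_{n\ge0}\alpha(n)\Psi_n$ one has, in polar coordinates, $f(\bx)=g\big(2\pi\|\bx\|^2\big)e^{-\pi\|\bx\|^2}$, where $g=\sum_{n}\alpha(n)L^{(\nu)}_n$ is the function in the statement. The first, easy half is the $L^2$ identity: the substitution $x=2\pi\|\bx\|^2$ (using $\d\bx=\omega_{d-1}r^{d-1}\,\d r$, $\omega_{d-1}=2\pi^{d/2}/\Gamma(d/2)$, and $\nu=\tfrac d2-1$) identifies the radial subspace of $L^2(\R^d,\d\bx)$, up to the scalar $2^{-d/4}$, with $L^2\!\big(\R_+,\tfrac{x^\nu e^{-x}}{\Gamma(\nu+1)}\,\d x\big)$; since $C(p,d)/(p^{-1/2p}2^{1/p-1/2})^d=2^{d/4}$, this yields the second displayed identity directly.

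The core of the argument is an explicit formula for $e^{it\Delta}\Psi_n$, which I would derive from the elementary Gaussian evolution identity
\[
e^{it\Delta}e^{-\pi a\|\bx\|^2}=(1+4\pi ita)^{-d/2}\,e^{-\pi a\|\bx\|^2/(1+4\pi ita)},\qquad \re a>0
\]
(valid with the paper's convention $\partial_t u=i\Delta u$ and the Fourier normalization under which $e^{-\pi\|\bx\|^2}$ is self-dual), together with $\sum_{n\ge0}L^{(\nu)}_n(y)w^n=(1-w)^{-\nu-1}e^{-yw/(1-w)}$. Choosing $a=\tfrac{1+w}{1-w}$, so that $\sum_n\Psi_n(\bx)w^n=(1-w)^{-\nu-1}e^{-\pi a\|\bx\|^2}$, applying $e^{it\Delta}$, and exploiting the cancellation $\nu+1=\tfrac d2$, one can recast the result as a power series in $w\beta$ with $\beta=\beta(t):=\tfrac{1-4\pi it}{1+4\pi it}$ ($|\beta|=1$), and reading off the coefficient of $w^n$ yields
\[
e^{it\Delta}\Psi_n(\bx)=(1+4\pi it)^{-d/2}\,e^{-\pi\|\bx\|^2/(1+4\pi it)}\,e^{2\pi in\theta(t)}\,L^{(\nu)}_n\!\Big(\tfrac{2\pi\|\bx\|^2}{1+16\pi^2t^2}\Big),\qquad \theta(t):=-\tfrac1\pi\arctan(4\pi t)\in(-\tfrac12,\tfrac12),
\]
with $e^{2\pi in\theta(t)}=\beta(t)^n$ exactly the phase appearing in the definition of $\LL^t$.

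Granting this, I would evaluate the left-hand side of the asserted identity directly. Fix $t$ and set $\rho=\rho(t)=1+16\pi^2t^2$; taking absolute values (the branch of $(1+4\pi it)^{-d/2}$ is irrelevant) gives $|e^{it\Delta}f(\bx)|=\rho^{-d/4}e^{-\pi\|\bx\|^2/\rho}\,\big|\sum_n\alpha(n)e^{2\pi in\theta(t)}L^{(\nu)}_n(2\pi\|\bx\|^2/\rho)\big|$. The substitution $x=p\pi\|\bx\|^2/\rho$ converts the Laguerre argument into $\tfrac 2p x$ (so the bracket becomes $|\LL^{\theta(t)}g(x)|$), the Gaussian into $e^{-x}$, and the radial Jacobian into a constant multiple of $\rho^{\,d/2-dp/4}x^\nu e^{-x}\,\d x$; the constant works out so that
\[
\int_{\R^d}|e^{it\Delta}f(\bx)|^p\,\d\bx=\frac{\rho^{\,d/2-dp/4}}{p^{d/2}}\int_0^\infty|\LL^{\theta(t)}g(x)|^p\,\frac{x^\nu e^{-x}}{\Gamma(\nu+1)}\,\d x .
\]
Raising this to the power $q/p$ and changing the time variable from $t$ to $\theta=\theta(t)$ — which is the variable $t$ appearing on the right-hand side of the statement, with $\d t=\tfrac14\rho\,\d\theta$, $\rho=\cos^{-2}(\pi\theta)$, and $t\in\R$ running over $\theta\in(-\tfrac12,\tfrac12)$ — leaves a single power $\rho^{\,\frac1{4p}(dq(2-p)+4p)}$, whose exponent vanishes precisely because $\tfrac dp+\tfrac 2q=\tfrac d2$. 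Thus all $\rho$-factors cancel, and the $q$-th root produces the constant $4^{-1/q}p^{-d/(2p)}=(p^{-1/2p}2^{1/p-1/2})^d$, the last equality again being the admissibility relation; this is the first displayed identity.

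The only step that is not pure constant-chasing is the justification of the two formal interchanges above — commuting $e^{it\Delta}$ with the Laguerre series, and commuting that series with the spatial integral after taking moduli. I expect the clean way around this is to prove both identities first for $f$ a finite linear combination of the $\Psi_n$ (where the generating-function manipulation reduces to finitely many uses of the Gaussian evolution identity and every interchange is trivial), and then to extend to an arbitrary radial $f\in L^2(\R^d,\d\bx)$ by density, invoking that the Strichartz estimate~\eqref{St-est} makes the left-hand side continuous under $L^2$-convergence of the data, so that the identity passes to the limit. Everything remaining is bookkeeping, and — as the computation indicates — each constant is forced at every stage by $\tfrac dp+\tfrac2q=\tfrac d2$.
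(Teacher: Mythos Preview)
Your proposal is correct and follows essentially the same route as the paper: compute $e^{it\Delta}\Psi_n$ explicitly, then perform the spatial substitution $x=p\pi\|\bx\|^2/\rho$ and the temporal substitution $\theta=-\tfrac1\pi\arctan(4\pi t)$, with the admissibility relation $\tfrac dp+\tfrac 2q=\tfrac d2$ killing all residual powers of $\rho$. The only difference is in how the evolution formula for $\Psi_n$ is obtained---the paper quotes a tabulated Hankel--transform identity (Lemma~\ref{SE-flow-lague-herm}), whereas you derive the same formula from the Laguerre generating function combined with the Gaussian propagator, a self-contained alternative that yields exactly the same expression.
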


\noindent {\bf Remark.} 
The above result allows us to explore again the cases where the exponents are even, giving us the opportunity of replicating the results of the Hermite case. However, the resulting operators are not projections any more and the methods used to analyze the Hermite case (see the proof of Theorem \ref{L2-L2-estimate-hermite}) do not work. Roughly speaking, the main reason why they do not work is that the Poisson Kernel associated with Laguerre polynomials is given by the Bessel functions $I_\nu(z)$, which are wild creatures that do not enjoy being handled.

\subsubsection{{\bf The Case $(p,q,d)=(4,4,2)$}} 
We start by calculating the resulting operator that arises from the right hand side term of \eqref{St-est-semigroup-laguerre}. For $d=2$ we have $\nu=d/2-1=0$ and we write $L_n(x)=L_n^{(0)}(x)$ to simplify notation. If $g(x)=\sum_{n\geq 0} \alpha(n)L_n(x)$ we deduce that
\es{\label{crucial-id-laguerre}
& \int_{-1/2}^{1/2}\int_0^\infty|\LL^tg(x)|^4 e^{-x}\d x\dt \\ &
= \sum_{a,b\geq 0} \alpha(a)\alpha(b)\bigg\{\sum_{\stackrel{c,d\geq 0}{c+d=a+b}}\ov{\alpha(c)}\,\ov{\alpha(d)}\int_0^\infty L_a(x/2)L_b(x/2)L_c(x/2)L_d(x/2)e^{-x}\d x \bigg\}
}
and
\es{\label{crucial-id-laguerre-2}
\bigg(\int_0^\infty|g(x)|^2 e^{-x}\d x\bigg)^2 = \sum_{n,m\geq 0} |\alpha(n)\alpha(m)|^2.
}
The above calculations suggest the following definitions. Let $\G=\ell^2(\Z_+^2)$ be the standard Hilbert space of complex-valued sequences $\{\p(a,b)\}_{(a,b)\in\Z_+^2}$ such that
$$
\|\p\|_\G^2:=\sum_{a,b\geq 0} |\p(a,b)|^2 < \infty.
$$
The inner product in $\G$ is given by
$$
\langle \p,\psi \rangle_\G = \sum_{a,b\geq 0} \p(a,b)\ov{\psi(a,b)}.
$$
Next, we define an operator $\Q$ for any given $\p\in \G$ by 
\est{%\label{laguerre-op}
\Q\p(a,b) = \sum_{\stackrel{c,d\geq 0}{c+d=a+b}} \p(c,d)Q(a,b,c,d),
}
where
\es{\label{Q-coeff}
Q(a,b,c,d)=\int_0^\infty L_a(x/2)L_b(x/2)L_c(x/2)L_d(x/2)e^{-x}\d x.
}
In analogy with the Hermite polynomial approach, we can analyze the operator $\Q$ by its action in certain orthogonal invariant subspaces of finite dimension. For any integer $S\geq 0$, let $\G_S$ denote the subspace of sequences $\p:\Z^2_+\to \C$ such that $\p(a,b)=0$ if $a+b\neq S$. Clearly, the collection $\{\G_S\}_{S\geq 0}$ is orthogonal and 
$$
\G = \ov{\bigoplus_{S\geq 0} \G_S}.
$$
We also have that $\dim(\G_S)=S+1$ and $\Q(\G_S)\subset \G_S$. Letting $\Q_S$ denote the restriction of $\Q$ to the subspace $\G_S$, we conclude that the operator $\Q_S$ can be represented by the following matrix 
\es{\label{matrix-rep-Q}
\Q_S=[Q(a,S-a,c,S-c)]_{a,c=0,{{...}},S}.
}

\begin{theorem}\label{QS-op-thm}
For any radial $f(\bx)=\sum_{n\geq 0} \alpha(n)\Psi_n(x) \in L^2(\R^2,\d\bx)$ we have
\begin{equation}\label{int-id-G}
\|e^{it\Delta}f(\bx)\|^4_{L^4(\R^3,\d\bx\dt)} = \frac{1}{16}\langle \p, \Q\p \rangle_\G
\end{equation}
and
\begin{equation}\label{int-id-G-2}
\big(2^{-1/2}\|f(\bx)\|_{L^2(\R^2,\d\bx)}\big)^4 = \frac{1}{16}\|\p\|_\G^2,
\end{equation}
where $\p(a,b)=\alpha(a)\alpha(b)$ for all $a,b\geq 0$. For any $S\geq 0$ the matrix $\Q_S$ at \eqref{matrix-rep-Q} is a positive semi-definite doubly stochastic matrix with strictly positive entries. We conclude that $\|\Q\|_{\G\to\G}=1$. Furthermore, a function $\p\in \G$ satisfies 
$$
\langle \p, \Q\p \rangle_\G = \|\p\|_\G^2
$$
if and only if it has the property that $\p(a,b)=\p(c,d)$ whenever $a+b=c+d$.
\end{theorem}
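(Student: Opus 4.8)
The plan is to verify the two integral identities by direct expansion, then establish the three matrix properties of $\Q_S$ one at a time, and finally deduce the norm statement and the equality characterization from those properties. For the identities \eqref{int-id-G} and \eqref{int-id-G-2}, I would invoke Theorem \ref{equiv-thm-laguerre} with $(p,q,d)=(4,4,2)$ (so $\nu=0$, $\lambda=\sqrt{1/2}$, and the normalizing constant $(p^{-1/2p}2^{1/p-1/2})^d$ becomes $(4^{-1/8}2^{-1/4})^2=2^{-1/2}/\sqrt{2}$, which after raising to the fourth power produces the factor $1/16$), combined with the expansions \eqref{crucial-id-laguerre} and \eqref{crucial-id-laguerre-2} and the definition $\p(a,b)=\alpha(a)\alpha(b)$. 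The substitution $x\mapsto 2x$ in \eqref{Q-coeff} accounts for the argument $x/2$ versus $\lambda^2 x = x/2$; this is a bookkeeping exercise, so I would not belabor it.

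The core of the theorem is the claim that each $\Q_S=[Q(a,S-a,c,S-c)]$ is positive semi-definite, doubly stochastic, and has strictly positive entries. Positive semi-definiteness is immediate: for any $\p\in\G_S$, writing $g(x)=\sum_{a=0}^S\p(a,S-a)L_a(x/2)L_{S-a}(x/2)$ (a genuine $L^2(e^{-x}\dx)$ function), we have $\langle\p,\Q_S\p\rangle_{\G_S}=\int_0^\infty |g(x)|^2 e^{-x}\dx\geq 0$; this is just reading \eqref{crucial-id-laguerre} backwards on a single shell. For double stochasticity I would use the linearization formula for products of Laguerre polynomials, or more cleanly the generating function identity $\sum_{n\geq 0}L_n(x)r^n=(1-r)^{-1}e^{-xr/(1-r)}$ (valid for $|r|<1$, $\nu=0$): the row sums $\sum_{c=0}^S Q(a,S-a,c,S-c)$ are the coefficient of $r^S$ in $\int_0^\infty L_a(x/2)L_{S-a}(x/2)\big(\sum_c L_c(x/2)r^c\big)\big(\sum_d L_d(x/2)r^d\big)e^{-x}\dx$; the two generating-function factors collapse to $(1-r)^{-2}e^{-xr/(1-r)}$, and the resulting elementary integral $\int_0^\infty L_a(x/2)L_{S-a}(x/2)e^{-x(1+r/(1-r))}\dx$ should evaluate to a closed form whose $r^S$-coefficient is $1$ (the value forced by applying the identity to the constant sequence $\p\equiv 1$, for which $g(x)=\big(\sum_n L_n(x/2)r^n\big)^2$ and the integral is manifestly $(1-r)^{-2}\int_0^\infty e^{-x(1+2r/(1-r))}\dx=(1-r)^{-1}(1+r)^{-1}=\sum_S r^S$... wait, that gives $\sum_S r^S$ only after checking signs — this is exactly the point to be careful). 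Symmetry of $Q$ in its four arguments is obvious from \eqref{Q-coeff}, so column sums equal row sums. Strict positivity of the entries follows because $L_a(x/2)L_{S-a}(x/2)L_c(x/2)L_{S-c}(x/2)e^{-x}$ is a real analytic function that is not identically zero and is eventually of one sign (dominated by its leading term $\sim (x/2)^{2S}e^{-x}>0$ for large $x$), so its integral cannot vanish — though one must rule out cancellation; the cleanest route is to note $Q(a,b,c,d)$ is, up to positive constants, a sum of terms $\int_0^\infty L_j(x)L_k(x)L_m(x)e^{-x}\dx$-type Laguerre connection coefficients, all known to be nonnegative, with the top one strictly positive.

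Given the three properties, the norm claim and the equality case follow from the Perron--Frobenius theorem. A doubly stochastic matrix has operator norm (equivalently, spectral radius, since it is symmetric positive semi-definite) exactly $1$, attained on the constant vector; strict positivity of the entries makes $\Q_S$ irreducible, so by Perron--Frobenius the eigenvalue $1$ is simple and its eigenspace is spanned by the all-ones vector $(1,\dots,1)\in\G_S$. Since $\Q=\overline{\bigoplus_S \Q_S}$ and each $\|\Q_S\|=1$, we get $\|\Q\|_{\G\to\G}=1$. For the equality characterization: $\langle\p,\Q\p\rangle_\G=\|\p\|_\G^2$ with $\|\Q\|=1$ forces, shell by shell, $\langle\p_S,\Q_S\p_S\rangle=\|\p_S\|^2$; since $I-\Q_S\succeq 0$ this means $\p_S$ lies in the kernel of $I-\Q_S$, i.e. the $1$-eigenspace of $\Q_S$, which is $\C\cdot(1,\dots,1)$ — precisely the statement that $\p(a,b)$ depends only on $a+b$. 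The main obstacle is the double-stochasticity computation: extracting the row sum of $\Q_S$ in closed form requires the right Laguerre generating-function manipulation and careful sign tracking (the $\nu=0$ case is favorable but the $x/2$ scaling deforms the usual orthogonality, so the row sums are genuinely a nontrivial identity rather than a triviality), and establishing strict positivity of every entry rather than mere nonnegativity needs the connection-coefficient nonnegativity plus an argument that the leading term is not cancelled.
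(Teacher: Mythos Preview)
Your overall architecture matches the paper's: reduce to the shell operators $\Q_S$, establish that each is positive semi-definite, doubly stochastic, with strictly positive entries, then invoke a Perron--Frobenius-type argument (the paper cites it as Ostrowski's theorem) for the norm and the equality case. Your positive semi-definiteness argument via $\langle\p,\Q_S\p\rangle=\int_0^\infty|g(x)|^2e^{-x}\dx\geq 0$ is actually cleaner than what the paper does; the paper instead extracts from the explicit formula \eqref{explict-comb-form} a factorization $\Q_S=F_S^*F_S$.

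There are, however, two genuine gaps. For double stochasticity, your generating-function approach stalls exactly where you flag it: after collapsing two of the four Laguerre factors you are left with $\int_0^\infty L_a(x/2)L_{S-a}(x/2)e^{-x/(1-r)}\dx$, which does not evaluate painlessly, and your consistency check with $\p\equiv 1$ is not the row-sum computation you need. The paper instead sums the \emph{other} pair first, using the addition formula $\sum_{a+b=S}L_a(x/2)L_b(x/2)=L_S^{(1)}(x)$ (the $\nu=0$, $a=2$ case of \eqref{lague-mult-thm}), so that the row sum becomes $\int_0^\infty L_S^{(1)}(x)L_c(x/2)L_d(x/2)e^{-x}\dx$; since $L_c(x/2)L_d(x/2)-1$ has degree $\leq S$ and vanishes at $0$, it is $x$ times a polynomial of degree $\leq S-1$, which is orthogonal to $L_S^{(1)}$ against $e^{-x}x\,\dx$, reducing the row sum to $\int_0^\infty L_S^{(1)}(x)e^{-x}\dx=1$ via the telescoping identity \eqref{lague-diff-thm}. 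For strict positivity, your ``eventually of one sign'' heuristic is false (the integrand oscillates), and the connection-coefficient sketch is too vague to rule out cancellation. The paper does not prove this from scratch either: it invokes the explicit formula \eqref{explict-comb-form} from the literature (Askey--Ismail--Koorwinder, Gillis--Kleeman), which exhibits $2^N Q(a,b,c,d)$ as a sum of nonnegative terms whose $u=0$ summand is $\tfrac{(a+b)!(c+d)!}{a!b!c!d!}>0$.
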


\begin{corollary}\label{radial-St-sharp}
For any radial $f(\bx)\in L^2(\R^2,\d\bx)$ we have
\begin{equation}\label{radial-St-ineq}
\|e^{it\Delta}f(\bx)\|_{L^4(\R^3,\d\bx\dt)} \leq 2^{-1/2}\|f(\bx)\|_{L^2(\R^2,\d\bx)},
\end{equation}
and equality is attained if and only if $f(\bx)=Ae^{-B\|\bx\|^2}$, where $A,B\in\C$ and $\re B>0$.
\end{corollary}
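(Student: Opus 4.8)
The plan is to derive the corollary directly from Theorem~\ref{QS-op-thm}. Write $f(\bx)=\sum_{n\ge0}\alpha(n)\Psi_n(\bx)$ and $\p(a,b)=\alpha(a)\alpha(b)$. Since $\{\Psi_n\}$ is orthogonal in $L^2(\R^2,\d\bx)$ with $\|\Psi_n\|_{L^2}^2$ the same for every $n$, the hypothesis $f\in L^2$ gives $\sum_n|\alpha(n)|^2<\infty$, so $\|\p\|_\G^2=\big(\sum_n|\alpha(n)|^2\big)^2<\infty$, i.e.\ $\p\in\G$. Raising \eqref{radial-St-ineq} to the fourth power, the identities \eqref{int-id-G} and \eqref{int-id-G-2} reduce the inequality to $\langle\p,\Q\p\rangle_\G\le\|\p\|_\G^2$, which is immediate from the elementary bound $\langle\p,\Q\p\rangle_\G\le\|\Q\|_{\G\to\G}\|\p\|_\G^2$ together with the fact, established in Theorem~\ref{QS-op-thm}, that $\|\Q\|_{\G\to\G}=1$.

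For the equality case, equality in \eqref{radial-St-ineq} forces $\langle\p,\Q\p\rangle_\G=\|\p\|_\G^2$, so by the last assertion of Theorem~\ref{QS-op-thm} we need $\alpha(a)\alpha(b)=\alpha(c)\alpha(d)$ whenever $a+b=c+d$. I will show this forces $\alpha$ to be geometric. If $\alpha(0)=0$, testing the constraint on the even levels $a+b=2,4,6,\dots$ forces $\alpha(1)=\alpha(2)=\cdots=0$ by induction, so $f\equiv0$. If $\alpha(0)\ne0$, applying the constraint to $(a,b)=(1,n-1)$ and $(c,d)=(0,n)$ gives $\alpha(1)\alpha(n-1)=\alpha(0)\alpha(n)$, hence $\alpha(n)=\alpha(0)r^n$ with $r:=\alpha(1)/\alpha(0)$, and $\sum_n|\alpha(n)|^2<\infty$ forces $|r|<1$. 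Conversely, any geometric sequence with ratio in the unit disk manifestly satisfies the constraint.

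It remains to identify $f$ explicitly. Using the Laguerre generating function $\sum_{n\ge0}L_n^{(0)}(x)\,r^n=(1-r)^{-1}e^{-xr/(1-r)}$ for $|r|<1$ --- whose convergence I will upgrade to convergence in $L^2(\R^2,\d\bx)$ via the orthogonality of $\{\Psi_n\}$ and the uniform bound on $\|\Psi_n\|_{L^2}$ --- we obtain
\[
f(\bx)=\alpha(0)\sum_{n\ge0}r^n\Psi_n(\bx)=\frac{\alpha(0)}{1-r}\,e^{-2\pi r\|\bx\|^2/(1-r)}\,e^{-\pi\|\bx\|^2}=\frac{\alpha(0)}{1-r}\,e^{-\pi\frac{1+r}{1-r}\|\bx\|^2}.
\]
Hence $f(\bx)=Ae^{-B\|\bx\|^2}$ with $A=\alpha(0)/(1-r)$ and $B=\pi\frac{1+r}{1-r}$; since the Cayley transform $r\mapsto\frac{1+r}{1-r}$ maps the open unit disk onto $\{\re w>0\}$ we get $\re B>0$, and $f\equiv0$ corresponds to $A=0$. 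For the reverse implication, given $A,B\in\C$ with $\re B>0$, set $r=(B-\pi)/(B+\pi)$ (which lies in the unit disk) and $\alpha(n)=A(1-r)r^n$; reversing the above computation shows $f=\sum_n\alpha(n)\Psi_n=Ae^{-B\|\bx\|^2}$, which is radial and in $L^2$, and its coefficient sequence satisfies the equality condition, so equality holds.

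Finally, the main point: there is no deep obstacle here, since Theorem~\ref{QS-op-thm} carries all the analytic weight; the genuine content of the corollary is the equality analysis, namely recognising that the ``constant on level sets $a+b=\mathrm{const}$'' condition, specialised to the product sequences $\p(a,b)=\alpha(a)\alpha(b)$, is precisely the geometric-sequence condition, and then summing the Laguerre generating function and noting that $|r|<1$ corresponds under the Cayley transform exactly to $\re B>0$. The only technical nicety is justifying that the generating-function identity holds in the $L^2(\R^2,\d\bx)$ sense and not merely pointwise, which is exactly where the uniform bound $\|\Psi_n\|_{L^2}^2=\mathrm{const}$ is used.
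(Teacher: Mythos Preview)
Your proposal is correct and follows essentially the same route as the paper: both derive the inequality from the identities \eqref{int-id-G}--\eqref{int-id-G-2} together with $\|\Q\|_{\G\to\G}=1$, deduce from the equality characterization in Theorem~\ref{QS-op-thm} that $\alpha$ is geometric, and then sum the Laguerre generating function to identify $f$ as a Gaussian with the Cayley transform giving $\re B>0$. Your write-up is slightly more explicit than the paper's in two places (the case $\alpha(0)=0$, and the converse direction that every such Gaussian attains equality), but these are cosmetic additions rather than a different argument.
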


\subsection{Spherical Harmonics and Gegenbauer Polynomials}
In this part we make use of the special connection between spherical harmonics and Gegenbauer polynomials given by the Funk-Hecke formula \eqref{funk-hecke-formula} to prove a sharpened inequality for the Schr\"odinger operator that is maximized by radial functions. We perform a diagonalization process in an operator over $L^2(\Sp^{d-1})$ that naturally emerges from our approach and which ultimately allows us to perform a near-extremizer analysis.

For any $d\geq 3$ define the following operator
\es{\label{radial-operator}
R(g)(\bxi) = \int_{\Sp^{d-1}} {g(\bze)} \frac{\d\bze}{\|\bxi-\bze\|^{d-2}}
}
for $g\in L^2(\Sp^{d-1})$. Above, $\Sp^{d-1}$ represents the unit sphere in $\R^d$, $\|\cdot\|$ the Euclidean norm in $\R^d$ and $\d\bze$ (and $\d\bxi$ below) the natural surface measure over $\Sp^{d-1}$.

\begin{theorem}\label{radial-id-thm}
Let $d\geq 3$. Let $f\in L^2(\R^d,\d\bx)$ be a function of Schwartz class and define $g(r,\bxi) = \ft f(r\bxi)$ for any $r>0$ and $\bxi\in \Sp^{d-1}$. We obtain
\es{\label{radial-identi-operat-R}
\int_\R \int_{\R^d} |e^{it\Delta}f(\bx)|^2\frac{\d\bx}{\|\bx\|^2}\,\dt = \frac{\pi}{(d-2)|\Sp^{d-1}|}\int_0^\infty \int_{\Sp^{d-1}} g(r,\bxi){\ov{R(g(r,\cdot))(\bxi)}}\d\bxi r^{d-1}\d r.
}
Moreover, the operator $R$ is bounded over $L^2(\Sp^{d-1})$ and for all $g\in L^2(\Sp^{d-1})$ we have
\es{\label{R-sharp-ineq}
\int_{\Sp^{d-1}} g(\bxi) \ov{R(g)(\bxi)} \d\bxi \leq |\Sp^{d-1}|\bigg\{\int_{\Sp^{d-1}}  |g(\bxi)|^2 \d\bxi - \frac{2}{d}{\rm Dist}(g,{\rm Const})^2\bigg\},
}
where ${\rm Dist}(g,{\rm Const})$ denotes the distance in the $L^2(\Sp^{d-1})$-norm of $g(\bxi)$ to the subspace of constant functions.
\end{theorem}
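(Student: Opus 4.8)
The plan is to treat the two assertions separately: the space-time identity \eqref{radial-identi-operat-R} is a Plancherel computation in the frequency variable, while the boundedness and sharpness of $R$ follow from diagonalising $R$ by spherical harmonics.

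\emph{The identity.} With the normalisation $\ft f(\bxi)=\int_{\R^d}f(\bx)e^{-2\pi i\bx\cdot\bxi}\dx$ one has $\widehat{e^{it\Delta}f}(\bxi)=e^{-4\pi^2it\|\bxi\|^2}\ft f(\bxi)$, and for $d\geq 3$ the tempered distribution $\|\bx\|^{-2}$ has Fourier transform $c_d\|\bxi\|^{-(d-2)}$ with $c_d=\pi^{2-d/2}\Gamma(d/2-1)$. First I would fix $t$ and apply Parseval (pairing the $L^2$ function $|e^{it\Delta}f|^2$ against the distribution $\|\cdot\|^{-2}$) to write
$$\int_{\R^d}|e^{it\Delta}f(\bx)|^2\frac{\dx}{\|\bx\|^2}=c_d\int_{\R^d}\int_{\R^d}e^{-4\pi^2it(\|\bxi\|^2-\|\bze\|^2)}\,\ft f(\bxi)\,\ov{\ft f(\bze)}\,\|\bxi-\bze\|^{-(d-2)}\,\d\bxi\,\d\bze,$$
then integrate in $t\in\R$ using $\int_\R e^{-4\pi^2it(\|\bxi\|^2-\|\bze\|^2)}\dt=\tfrac1{2\pi}\delta(\|\bxi\|^2-\|\bze\|^2)$. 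Passing to polar coordinates $\bxi=r\om$, $\bze=s\sig$ with $r,s>0$ and $\om,\sig\in\Sp^{d-1}$, the delta forces $s=r$ with Jacobian $\tfrac1{2r}$, collapsing both radial integrations; collecting the resulting powers of $r$ (using $\|r\om-r\sig\|^{-(d-2)}=r^{-(d-2)}\|\om-\sig\|^{-(d-2)}$) and recognising the inner double sphere integral as $\int_{\Sp^{d-1}}g(r,\om)\,\ov{R(g(r,\cdot))(\om)}\,\d\om$ yields \eqref{radial-identi-operat-R}, once one checks that the emerging constant $\tfrac{c_d}{4\pi}$ equals $\tfrac{\pi}{(d-2)|\Sp^{d-1}|}$ — a short computation using $|\Sp^{d-1}|=2\pi^{d/2}/\Gamma(d/2)$ and $\Gamma(d/2)=\tfrac{d-2}{2}\Gamma(d/2-1)$. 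The step requiring genuine care, rather than the formal $\delta$-calculus above, is the interchange of the $t$-integration with the $\bxi,\bze$-integrations; for $f$ of Schwartz class I would justify it by inserting a Gaussian cutoff $e^{-\varepsilon t^2}$ (or by convolving $\|\bx\|^{-2}$ against a heat/Poisson approximate identity) and passing to the limit. I expect this to be the main technical obstacle for this half of the theorem.

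\emph{Diagonalisation of $R$.} On $\Sp^{d-1}$ one has $\|\bxi-\bze\|^2=2-2\bxi\cdot\bze$, so the kernel $\|\bxi-\bze\|^{-(d-2)}=2^{-(d-2)/2}(1-\bxi\cdot\bze)^{-(d-2)/2}$ is zonal; hence $R$ is a spherical convolution operator and acts on the space $\H_n$ of degree-$n$ spherical harmonics as a scalar $\lambda_n$. By the Funk--Hecke formula \eqref{funk-hecke-formula}, after the simplification $(1-t)^{-(d-2)/2}(1-t^2)^{(d-3)/2}=(1-t)^{-1/2}(1+t)^{(d-3)/2}$,
$$\lambda_n=|\Sp^{d-2}|\,2^{-(d-2)/2}\int_{-1}^{1}(1-t)^{-1/2}(1+t)^{(d-3)/2}\,\frac{C_n^{(\nu)}(t)}{C_n^{(\nu)}(1)}\,\dt,\qquad \nu=\tfrac{d-2}{2}.$$
Evaluating this classical Gegenbauer integral — equivalently, using the generating function $\sum_n C_n^{(\nu)}(\bxi\cdot\bze)t^n=(1-2t\,\bxi\cdot\bze+t^2)^{-\nu}$ in the Abel limit $t\to1^-$ together with the zonal-harmonic (addition) formula, or else the jump relations for the single-layer Newtonian potential on $\Sp^{d-1}$ — gives the closed form
$$\lambda_n=\frac{(d-2)\,|\Sp^{d-1}|}{2n+d-2},\qquad n\geq 0.$$
In particular every $\lambda_n>0$, the sequence $\{\lambda_n\}$ is strictly decreasing, and $\max_n\lambda_n=\lambda_0=|\Sp^{d-1}|$, so $R$ is self-adjoint, positive, and bounded with $\|R\|_{L^2(\Sp^{d-1})\to L^2(\Sp^{d-1})}=|\Sp^{d-1}|$. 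I regard obtaining this closed form for $\lambda_n$ as the heart of the second half, although it is classical special-function lore.

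\emph{The sharp inequality.} I would decompose $g=\sum_{n\geq 0}g_n$ into spherical-harmonic components, so that $g_0$ is the constant equal to the mean of $g$ and ${\rm Dist}(g,{\rm Const})^2=\sum_{n\geq 1}\|g_n\|_{L^2(\Sp^{d-1})}^2=\|g\|_{L^2(\Sp^{d-1})}^2-\|g_0\|_{L^2(\Sp^{d-1})}^2$. Since $R$ is self-adjoint with real eigenvalues, $\int_{\Sp^{d-1}}g\,\ov{R(g)}\,\d\bxi=\sum_{n\geq 0}\lambda_n\|g_n\|^2$, and $\lambda_n\leq\lambda_1=|\Sp^{d-1}|(1-\tfrac2d)$ for every $n\geq 1$. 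Therefore
$$\sum_{n\geq 0}\lambda_n\|g_n\|^2\leq|\Sp^{d-1}|\,\|g_0\|^2+|\Sp^{d-1}|\Big(1-\tfrac2d\Big)\sum_{n\geq 1}\|g_n\|^2=|\Sp^{d-1}|\Big(\|g\|^2-\tfrac2d\,{\rm Dist}(g,{\rm Const})^2\Big),$$
which is exactly \eqref{R-sharp-ineq}; moreover equality forces $g_n=0$ for all $n\geq 2$, i.e. $g$ must be the restriction to $\Sp^{d-1}$ of an affine function, which is what the subsequent near-extremizer analysis will need.
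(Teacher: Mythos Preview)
Your proposal is correct and follows essentially the same route as the paper: the identity is obtained by Plancherel in space, a $\delta$-calculus in $t$, and polar coordinates with the same constant bookkeeping, while the inequality is obtained by diagonalising $R$ on spherical harmonics via Funk--Hecke, computing $\lambda_n=\tfrac{(d-2)|\Sp^{d-1}|}{2n+d-2}$, and bounding $\lambda_n\le\lambda_1$ for $n\ge1$. The only cosmetic difference is that the paper evaluates the Gegenbauer integral by quoting a table formula (a Jacobi/Gegenbauer identity from Gradshteyn--Ryzhik), whereas you propose the generating-function or single-layer-potential route; either gives the same $\lambda_n$.
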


\begin{theorem}\label{radial-thm}
Let $d\geq 3$. Then for all $f\in L^2(\R^d,\d\bx)$ we have
\es{\label{radial-ineq}
\int_\R \int_{\R^d} |e^{it\Delta}f(\bx)|^2 \frac{\d\bx}{\|\bx\|^2}\dt \leq \frac{\pi}{d-2}\bigg\{\int_{\R^d} |f(\bx)|^2 \d\bx - \frac{2}{d}{\rm Dist}(f,\,{\rm Radial})^2\bigg\},
}
where ${\rm Dist}(f,\,{\rm Radial})$ denotes the distance in the $L^2(\R^d,\d\bx)$-norm of $f(\bx)$ to the subspace of radial functions. In particular, we have
\es{\label{radial-ineq-2}
\int_\R \int_{\R^d} |e^{it\Delta}f(\bx)|^2 \frac{\d\bx}{\|\bx\|^2}\dt \leq \frac{\pi}{d-2}\int_{\R^d} |f(\bx)|^2 \d\bx,
}
and equality is attained if and only if $f(\bx)$ is a radial function.
\end{theorem}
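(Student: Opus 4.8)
The plan is to derive Theorem~\ref{radial-thm} directly from Theorem~\ref{radial-id-thm} by integrating the spherical estimate \eqref{R-sharp-ineq} against the radial variable $r$. First I would fix $f$ of Schwartz class (the general case of $f\in L^2$ will follow by density once the inequality is established, since both sides are continuous in the $L^2$-norm — note the left side is finite by \eqref{radial-identi-operat-R} and the boundedness of $R$). Setting $g(r,\bxi)=\ft f(r\bxi)$ as in Theorem~\ref{radial-id-thm}, I apply the pointwise-in-$r$ inequality \eqref{R-sharp-ineq} to the function $\bxi\mapsto g(r,\bxi)$ and plug it into the identity \eqref{radial-identi-operat-R}. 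This gives
\est{
\int_\R \int_{\R^d} |e^{it\Delta}f(\bx)|^2\frac{\d\bx}{\|\bx\|^2}\,\dt \leq \frac{\pi}{(d-2)}\int_0^\infty \bigg\{\int_{\Sp^{d-1}}|g(r,\bxi)|^2\d\bxi - \frac{2}{d}{\rm Dist}(g(r,\cdot),{\rm Const})^2\bigg\} r^{d-1}\d r,
}
so it remains to match the two terms on the right with the two terms in \eqref{radial-ineq}.

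For the main term, I would use the unitarity of the Fourier transform on $L^2(\R^d)$ together with polar coordinates: $\int_0^\infty\int_{\Sp^{d-1}}|\ft f(r\bxi)|^2\,\d\bxi\,r^{d-1}\d r = \|\ft f\|_{L^2}^2 = \|f\|_{L^2}^2$. For the deficit term I need the identity
\est{
\int_0^\infty {\rm Dist}\big(g(r,\cdot),{\rm Const}\big)^2\, r^{d-1}\d r = {\rm Dist}(f,{\rm Radial})^2.
}
The key observation here is that the $L^2(\Sp^{d-1})$-orthogonal projection onto constants, applied to $\ft f(r\cdot)$, produces exactly the spherical average of $\ft f$ at radius $r$, i.e.\ the Fourier transform of the radial projection of $f$; hence ${\rm Dist}(g(r,\cdot),{\rm Const})^2 = \int_{\Sp^{d-1}}|\ft f(r\bxi) - \widehat{Pf}(r\bxi)|^2\d\bxi$ where $P$ is the orthogonal projection of $L^2(\R^d)$ onto radial functions, and Plancherel in polar coordinates again (using that $\ft$ commutes with $P$ because it commutes with rotations) turns the $r$-integral into $\|f-Pf\|_{L^2}^2 = {\rm Dist}(f,{\rm Radial})^2$. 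This yields \eqref{radial-ineq}. Inequality \eqref{radial-ineq-2} is then immediate by dropping the nonnegative deficit term, and the equality characterization follows since ${\rm Dist}(f,{\rm Radial})=0$ exactly when $f$ is radial, combined with the fact (from Theorem~\ref{radial-id-thm}) that equality in \eqref{R-sharp-ineq} for a.e.\ $r$ forces $g(r,\cdot)$ to be constant for a.e.\ $r$, which again means $f$ is radial.

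The main obstacle I anticipate is purely bookkeeping rather than conceptual: carefully justifying the interchange of the $r$-integration with the application of \eqref{R-sharp-ineq} and with Plancherel, and confirming that the normalizing constant $\pi/((d-2)|\Sp^{d-1}|)$ in \eqref{radial-identi-operat-R} combines correctly with the $|\Sp^{d-1}|$ appearing in \eqref{R-sharp-ineq} to leave exactly $\pi/(d-2)$ — in particular checking that the surface-measure normalization used in defining ${\rm Dist}(g,{\rm Const})$ on $\Sp^{d-1}$ is consistent with the unnormalized polar-coordinate decomposition of $\|f\|_{L^2(\R^d)}^2$. A secondary point requiring a line of care is the density argument extending from Schwartz $f$ to general $f\in L^2$: the right-hand side of \eqref{radial-ineq} is manifestly $L^2$-continuous, and the left-hand side is controlled by $\|f\|_{L^2}^2$ via the already-proven bound, so the extension is routine once the Schwartz case is in hand.
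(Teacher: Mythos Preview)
Your proposal is correct and follows essentially the same route as the paper: reduce to Theorem~\ref{radial-id-thm}, apply \eqref{R-sharp-ineq} fiberwise in $r$, and identify the integrated deficit term via the lemma $\int_0^\infty {\rm Dist}(g(r,\cdot),{\rm Const})^2\,r^{d-1}\d r = {\rm Dist}(f,{\rm Radial})^2$, which the paper states and proves in exactly this form (using the spherical-harmonic decomposition and the observation that the degree-zero piece is the radial projection). Your explicit mention that the Fourier transform commutes with the radial projection, and your density/continuity remarks, are details the paper leaves implicit, but the architecture is the same.
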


\begin{corollary}\label{radial-cor}
Let $d\geq 3$ and $2\leq p \leq 2+4/d$. There exists $C>0$ such that for all $f\in L^2(\R^d,\d\bx)$ we have
\est{%\label{radial-ineq-all-p}
\bigg\{\int_\R \int_{\R^d} |e^{it\Delta}f(\bx)|^p \frac{\d\bx}{\|\bx\|^{d+2-pd/2}}\dt \bigg\}^{1/p}\leq C\bigg\{\int_{\R^d} |f(\bx)|^2 \d\bx\bigg\}^{1/2}.
}
\end{corollary}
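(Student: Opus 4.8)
The plan is to obtain Corollary \ref{radial-cor} by interpolating between the two \emph{endpoints} of the stated range of exponents, both of which are already in hand. At $p=2$ the weight exponent $d+2-pd/2$ equals $2$, so the claimed inequality is exactly \eqref{radial-ineq-2} of Theorem \ref{radial-thm} (valid for every $f\in L^2(\R^d,\d\bx)$, with constant $C_0=\sqrt{\pi/(d-2)}$). At $p=2+4/d$ the weight exponent vanishes, and since the pair $(p,q)=(2+4/d,\,2+4/d)$ satisfies $\tfrac{d}{p}+\tfrac{2}{q}=\tfrac{d}{2}$ with $q$ finite, the claimed inequality is precisely the classical Strichartz estimate \eqref{St-est} with $q=p$, with constant $C_1$ the one from \eqref{sharp-const}. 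Hence only the intermediate exponents $2<p<2+4/d$ need an argument.

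Write $Tf(\bx,t)=e^{it\Delta}f(\bx)$, regarded as a jointly measurable function on $\R^{d+1}$ (joint measurability following from the strong continuity of $t\mapsto e^{it\Delta}f$ in $L^2$), and set $w_\beta(\bx,t)=\|\bx\|^{-\beta}$, a function of $\bx$ alone. The two endpoint facts then read: $T\colon L^2(\R^d,\d\bx)\to L^2(\R^{d+1},w_2\,\d\bx\dt)$ and $T\colon L^2(\R^d,\d\bx)\to L^{p_1}(\R^{d+1},\d\bx\dt)$ are bounded, where $p_1=2+4/d$. Given $2<p<p_1$, choose $\theta\in(0,1)$ with $\tfrac{1}{p}=\tfrac{1-\theta}{2}+\tfrac{\theta}{p_1}$; solving gives $\theta=\tfrac{(d+2)(p-2)}{2p}$. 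Put $a=\tfrac{p(1-\theta)}{2}$ and $b=\tfrac{p\theta}{p_1}$. One checks the arithmetic identities $2a+p_1b=p$, $\ a+b=p\big(\tfrac{1-\theta}{2}+\tfrac{\theta}{p_1}\big)=1$, and $2a=p(1-\theta)=d+2-pd/2$; the last of these is the point of the whole computation. These yield the pointwise factorization
\[
|Tf|^p\,\|\bx\|^{-(d+2-pd/2)} \;=\; \big(|Tf|^2\,w_2\big)^{a}\,\big(|Tf|^{p_1}\big)^{b},
\]
and, applying H\"older's inequality on $\R^{d+1}$ with exponents $1/a$ and $1/b$ (legitimate since $a+b=1$), followed by the two endpoint bounds,
\[
\int_\R\!\int_{\R^d} |Tf|^p\,\|\bx\|^{-(d+2-pd/2)}\,\d\bx\dt \;\le\; \bigg(\int_\R\!\int_{\R^d}|Tf|^2 w_2\,\d\bx\dt\bigg)^{a}\bigg(\int_\R\!\int_{\R^d}|Tf|^{p_1}\,\d\bx\dt\bigg)^{b}\;\le\; \big(C_0^{1-\theta}C_1^{\theta}\,\|f\|_{L^2}\big)^p .
\]
This is the asserted inequality with $C=C_0^{1-\theta}C_1^{\theta}$. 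Conceptually the argument is nothing more than complex (Stein--Weiss) interpolation of the weighted Lebesgue spaces applied to the single operator $T$; the H\"older computation above is just its self-contained avatar, and one may alternatively quote the change-of-measure interpolation theorem for weighted $L^p$ spaces directly.

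I do not anticipate a genuine obstacle. The only items requiring care are the two elementary algebraic identities linking $\theta$, $p$ and the weight exponent (routine), and the remark that both endpoint inequalities already hold for \emph{all} of $L^2(\R^d,\d\bx)$, so that no density or limiting step is needed and the endpoints $p=2$, $p=2+4/d$ are themselves covered. (If one wished, the same scheme starting from the sharpened \eqref{radial-ineq} instead of \eqref{radial-ineq-2} would produce a version of the corollary with a lower-order correction term subtracted on the right-hand side, but the statement as phrased needs only the crude bound.)
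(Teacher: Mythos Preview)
Your proposal is correct and follows the same route the paper indicates: interpolate between the weighted $L^2$ endpoint of Theorem \ref{radial-thm} and the diagonal Strichartz estimate \eqref{St-est} at $p=q=2+4/d$. The paper simply cites the Stein--Weiss change-of-measure interpolation theorem, while you carry out the equivalent H\"older computation by hand (and explicitly note the equivalence); the only minor slip is identifying $C_1$ with the constant in \eqref{sharp-const}, which is only conjecturally sharp for general $d$, but any Strichartz constant suffices here.
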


\noindent {\bf Remark.} 
\begin{enumerate}
\item The above corollary is a straightforward consequence of Stein and Weiss interpolation result with change of measures \cite[Theorem 2.11]{SW} $($which works very well for homogeneous weights$)$ in conjunction with Theorem \ref{radial-thm} and the Strichartz estimate \eqref{St-est} for $p=q=2+4/d$. To the best of our knowledge the first time that inequality \eqref{radial-ineq-2} appeared was in \cite{Sim}. In \cite{Wat}, Watanabe identified the extremizers as radial functions (and also for related inequalities for the fractional laplacian). This fact was later rediscovered (in a much general framework) by Bez and Sugimoto in \cite{BS}. Our improvement here lies in the near extremizer analysis of \eqref{radial-ineq}.

\item The fact that \eqref{radial-ineq-2} is attained for any radial function is a direct consequence of \eqref{radial-identi-operat-R} and the fact that $R(\bo 1)\equiv |S^{d-1}|\bo 1$ (this can be shown using \eqref{funk-hecke-formula} and \eqref{magic-formula-Geg} for $n=0$ and $a=\nu$). We also note that the proofs of Theorems \ref{radial-id-thm} and \ref{radial-thm} actually show that the difference between the left and right hand side in \eqref{radial-ineq-2} is proportional to ${\rm Dist}(f,\,{\rm Radial})^2$.
\end{enumerate}

\section{Proofs for the Main Results}%\label{proofs}

Throughout this paper we use the following definition for the Fourier Transform of a function $f(\bx)$
\es{\label{FT}
\ft f(\by)  = \int_{\R^d} f(\bx)e^{-2\pi i\bx \cdot \by}\d\bx.
}

\subsection{The Hermite and Laguerre Polynomials Part}

To prove Theorems \ref{equiv-thm-hermite} and \ref{equiv-thm-laguerre} we start by calculating the solution of the Schr\"odinger equation for the functions $\Phi_{\bm}(\bx)$ and $\Psi_{n}(\bx)$ defined in \eqref{Phi-def} and \eqref{Psi-def} respectively.

\begin{lemma}\label{SE-flow-lague-herm}
For all $\bm\in\Z^d_+$ we have
\es{\label{hermite-flow-id}
& e^{i\Delta t} (\Phi_{\bm})(\bx) \\ & = (1+4\pi it)^{-d/2}\left(\sqrt{\frac{1-4\pi it}{1+4\pi it}}\right)^{|\bm|} \Phi_\bm\left(\frac{\bx}{\sqrt{1+16\pi^2 t^2}}\right)\exp\left[\frac{4\pi^2it}{1+16\pi^2 t^2}\|\bx\|^2\right].
}
Also, for all $n\geq 0$ we have
\es{\label{laguerre-flow-id}
& e^{i\Delta t} (\Psi_{n})(\bx) \\ & = (1+4\pi it)^{-d/2}\left(\frac{1-4\pi it }{1+4\pi it}\right)^{n} \Psi_n\left(\frac{\bx}{\sqrt{1+16\pi^2 t^2}}\right)\exp\left[\frac{4\pi^2 it}{1+16\pi^2 t^2}\|\bx\|^2\right].
}
\end{lemma}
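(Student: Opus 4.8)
The plan is to compute $e^{it\Delta}$ applied to the Gaussian-times-polynomial building blocks directly, exploiting the fact that the Schr\"odinger evolution of a Gaussian is an explicit Gaussian and that differentiating in a suitable parameter produces exactly the Hermite/Laguerre structure. The cleanest route is via a \emph{generating function}. Recall that the Hermite polynomials $H_m$ (monic, for weight $e^{-x^2/2}$) have a generating function of the form $\sum_{m\ge 0} H_m(x)\,\frac{w^m}{m!} = e^{wx - w^2/2}$, so the tensor product $\Phi_{\bm}$ is encoded, after the rescaling $x\mapsto \sqrt{4\pi}\,x$ and multiplication by $e^{-\pi\|\bx\|^2}$, in a Gaussian generating function $G_{\bw}(\bx) = \exp\big(-\pi\|\bx\|^2 + \sqrt{4\pi}\,\bw\cdot\bx - \tfrac12\|\bw\|^2\big)$ whose Taylor coefficients in $\bw$ are the $\Phi_{\bm}(\bx)/\bm!$. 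Since $e^{it\Delta}$ is linear and continuous on the relevant space, it suffices to compute $e^{it\Delta}G_{\bw}$ and then read off the coefficient of $\prod_j w_j^{m_j}/m_j!$. Because $G_{\bw}$ is a (complex) Gaussian in $\bx$ with a linear term, $e^{it\Delta}G_{\bw}$ is again an explicit Gaussian: using the standard formula $e^{it\Delta}\big(e^{-a\|\bx\|^2 + \bb\cdot\bx}\big) = (1+4ait)^{-d/2}\exp\big(\frac{-a\|\bx\|^2 + \bb\cdot\bx + it|\bb|^2}{1+4ait}\big)$ (valid by analytic continuation from real $a>0$, with the branch of the square root and of $(1+4ait)^{-d/2}$ fixed by positivity at $t=0$), applied with $a=\pi$ and $\bb = \sqrt{4\pi}\,\bw$, one obtains a closed form for $e^{it\Delta}G_{\bw}(\bx)$.

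The next step is bookkeeping: I would rewrite the resulting Gaussian in the variables $\bx' = \bx/\sqrt{1+16\pi^2t^2}$ and $\bw' = \big(\tfrac{1-4\pi it}{1+4\pi it}\big)^{1/2}\bw$, chosen precisely so that the exponent regroups as $\big[\text{prefactor depending only on }t\big] + \big[-\pi\|\bx'\|^2 + \sqrt{4\pi}\,\bw'\cdot\bx' - \tfrac12\|\bw'\|^2\big] + \big[\tfrac{4\pi^2 it}{1+16\pi^2t^2}\|\bx\|^2\big]$; this is a direct algebraic identity for the quadratic form, which I will verify by completing the square (keeping track that $\frac{1}{1+4\pi it} = \frac{1-4\pi it}{1+16\pi^2 t^2}$ separates the real-Gaussian part from the oscillatory phase $e^{i c(t)\|\bx\|^2}$). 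Having done this, $e^{it\Delta}G_{\bw}(\bx)$ equals $(1+4\pi it)^{-d/2}\,e^{\frac{4\pi^2 it}{1+16\pi^2 t^2}\|\bx\|^2}\,G_{\bw'}(\bx')$, and expanding $G_{\bw'}(\bx')$ in powers of $\bw'$ gives $\sum_{\bm}\Phi_{\bm}(\bx')\,(\bw')^{\bm}/\bm!$; substituting $(\bw')^{\bm} = \big(\tfrac{1-4\pi it}{1+4\pi it}\big)^{|\bm|/2}\bw^{\bm}$ and matching coefficients of $\bw^{\bm}/\bm!$ yields exactly \eqref{hermite-flow-id}. The Laguerre identity \eqref{laguerre-flow-id} is obtained the same way from the Laguerre generating function $\sum_{n\ge0} L_n^{(\nu)}(y)\,w^n = (1-w)^{-\nu-1}\exp\big(\frac{-wy}{1-w}\big)$ with $y = 2\pi\|\bx\|^2$, after multiplying by $e^{-\pi\|\bx\|^2}$; the $w$-dependent Gaussian in $\bx$ has no linear term (it is radial), the same Gaussian-evolution formula applies, the rescaling of $\bx$ is identical, and the factor $\big(\tfrac{1-4\pi it}{1+4\pi it}\big)^{n}$ (integer power, not a square root) emerges from tracking $w\mapsto w\cdot\tfrac{1-4\pi it}{1+4\pi it}$ together with the $(1-w)^{-\nu-1}$ prefactor combining with $(1+4\pi it)^{-d/2}$; here one uses $\nu = d/2-1$.

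I expect the main obstacle to be purely in the analytic-continuation/branch-tracking bookkeeping rather than in any conceptual difficulty: one must justify that $e^{it\Delta}$ applied to the complex Gaussian $G_{\bw}$ is given by the analytically continued formula (this follows since both sides are entire in $a$ in a neighborhood of the positive real axis after multiplying by a fixed Gaussian decay, or alternatively by a direct contour computation of the Fresnel integral), and one must consistently choose the square-root branch so that $\sqrt{(1-4\pi it)/(1+4\pi it)}$ and $(1+4\pi it)^{-d/2}$ reduce to $1$ at $t=0$ and vary continuously in $t\in\R$. A secondary point to handle is the interchange of $e^{it\Delta}$ with the $\bw$-expansion; this is legitimate because the series for $G_{\bw}$ converges in $L^2(\R^d,\d\bx)$ (indeed in Schwartz topology) locally uniformly in $\bw$, and $e^{it\Delta}$ is a bounded operator on $L^2$, so coefficient extraction commutes with the evolution. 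Once these technical points are settled, the identities \eqref{hermite-flow-id} and \eqref{laguerre-flow-id} follow by matching Taylor coefficients.
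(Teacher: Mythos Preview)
Your generating-function argument is correct and complete; the algebra you sketch (rewriting the evolved Gaussian in the variables $\bx'=\bx/\sqrt{1+16\pi^2t^2}$ and $\bw'=\omega\bw$, respectively $w'=\omega w$ with $\omega=\frac{1-4\pi it}{1+4\pi it}$) does close up exactly as claimed, and the analytic-continuation and interchange justifications you flag are routine.

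The paper takes a different route. Rather than packaging the $\Phi_{\bm}$ or $\Psi_n$ into a generating Gaussian and evolving that, it works on the Fourier side: it first establishes closed-form Fourier transforms
\[
\ft{\big(L^{(\nu)}_n(2\pi\|\bx\|^2)e^{-\pi(a+1)\|\bx\|^2}\big)}(\by)
\quad\text{and}\quad
\ft{\big(H_n(\sqrt{4\pi}\,x)e^{-\pi(a+1)x^2}\big)}(y)
\]
for general complex $a$ with $\re a>-1$, by quoting integral identities from Gradshteyn--Ryzhik (a Hankel transform of Laguerre functions, and two Hermite integrals). Setting $a=0$ shows $\Phi_{\bm},\Psi_n$ are Fourier eigenfunctions, and then $e^{it\Delta}$ is applied as the Fourier multiplier $e^{-4\pi^2 it\|\cdot\|^2}$, which simply shifts $a$ to $a=4\pi it$; inverting via the same formulas gives \eqref{hermite-flow-id} and \eqref{laguerre-flow-id}.

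So the comparison is: the paper's proof is shorter on the page but imports nontrivial tabulated integrals, while your proof is self-contained, needing only the elementary Schr\"odinger evolution of a pure Gaussian and the standard Hermite/Laguerre generating functions. Your approach also makes the appearance of the M\"obius-type factor $\frac{1-4\pi it}{1+4\pi it}$ more transparent, since it arises directly from the substitution $\bw\mapsto\bw'$ (or $w\mapsto w'$) rather than from an opaque table entry; in the Laguerre case the cancellation $(1-w)^{-\nu-1}(1-w)^{d/2}=1$ that you allude to is precisely where $\nu=d/2-1$ is used, matching the paper's implicit use of the same relation.
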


\begin{proof}
We prove first the second identity. Firstly, if $f(\bx)=f(\|\bx\|)$ is a radial function, then its Fourier transform $\widehat{f}(\by)=\ft f(\|\by\|)$ is also radial and we have
\begin{equation}\label{radial-fourier}
s^{\nu}\widehat{f}(s) =2\pi \int_0^\infty  f(r)J_\nu(2\pi rs)r^{\nu+1}\d r, 
\end{equation}
for every $s>0$, where $\nu=d/2-1$ and $J_\nu(z)$ is the Bessel function of the first kind. Secondly, the identity in \cite[7.421-4]{GR} states that
\begin{equation*}
\int_0^\infty x^{\nu+1}e^{-\beta x^2}L^{(\nu)}_n\big(\alpha x^2\big)J_\nu(xy)dx=(2\beta)^{-\nu-1}\bigg(1-\frac{\alpha}{\beta}\bigg)^n y^\nu e^{-\frac{y^2}{4\beta}}L_n^{(\nu)}\bigg(\frac{\alpha y^2}{4\beta(\alpha-\beta)}\bigg)
\end{equation*}
for any $\alpha,\beta\in\C$ with $\re \beta>0$ and any $\nu>-1$. Applying the above identity for $\alpha=2\pi$ and $\beta=\pi(a+1)$ in conjunction with identity \eqref{radial-fourier} we deduce that
\es{\label{Laguerre-a-id}
\ft{\bigg(L^{(\nu)}_n(2\pi\|\bx\|^2)e^{-\pi (a+1)\|\bx\|^2}\bigg)}(\by) = (1+a)^{-d/2}\bigg(\frac{a-1}{a+1}\bigg)^{n} L_n^{(\nu)}\bigg(\frac{2\pi\|\by\|^2}{1-a^2}\bigg)e^{-\frac{\pi\|\by\|^2}{1+a}},
}
where $\nu=d/2-1$. Taking $a=0$ in the above identity reads $\ft \Psi_n (\by)= (-1)^n\Psi_n(\by)$. Finally, since
\est{
\ft{(e^{i\Delta t} \Psi_{n})}(\by) = (-1)^n e^{4\pi^2 it\|\by\|^2} \Psi_{n}(\by) = (-1)^n L^{(\nu)}_n(2\pi\|\by\|^2)e^{-\pi (4\pi it+1)\|\by\|^2},
}
we can use identity \eqref{Laguerre-a-id} with $a=4\pi it$ to deduce \eqref{laguerre-flow-id}.

Identity \eqref{hermite-flow-id} in dimension $d>1$ follows from its one-dimensional version since we have $\Phi_\bm(\bx)=\Phi_{m_1}(x_1){{...}}\Phi_{m_d}(x_d)$ if $\bm=(m_1,{{...}},m_d)$. We can now use identities \cite[7.388--2 and 7.388--4]{GR} to show that
\es{\label{hermite-a-id}
\ft{\bigg(H_n(\sqrt{4\pi}x)e^{-\pi (a+1)x^2}\bigg)}(y) = (1+a)^{-d/2}\bigg(-\sqrt{\frac{a-1}{a+1}}\bigg)^{n} H_n\bigg(\frac{\sqrt{4\pi}y}{\sqrt{1-a^2}}\bigg)e^{-\frac{\pi y^2}{1+a}},
}
for any $a\in\C$ with $\re a >-1$. Finally, identity \eqref{hermite-flow-id} for $d=1$ follows by an analogous way as in the Laguerre polynomial case, but now using identity \eqref{hermite-a-id} for $a=4\pi it$ and the fact that $\ft{(e^{i\Delta t} \Phi_{n})}(\by) = (-i)^n e^{-4\pi^2 it\|\by\|^2} \Phi_{n}(\by)$ (which can also be deduced from \eqref{hermite-a-id} by taking $a=0$).
\end{proof}

\begin{proof}[\bf Proof of Theorem \ref{equiv-thm-hermite}]
\noindent {\it Step 1.}
Let $f\in L^2(\R^d,\d\bx)$ have the following expansion
$$
f(\bx) = \sum_{\bm\in\Z^d_+}\alpha(\bm) \Phi_\bm(\bx)
$$
and define $g(\bx)=\sum_{\bm\in\Z^d_+}\alpha(\bm) H_\bm(\bx)$. By Lemma \ref{SE-flow-lague-herm}, we obtain that
\est{
|e^{i\Delta t}f(\bx)|= |1+16\pi^2 t^2|^{-d/4}\bigg|\sum_{\bm\in\Z^d_+} \alpha(\bm) \left(\sqrt{\frac{1-4\pi it }{1+4\pi it}}\right)^{|\bm|} \Phi_\bm\left(\frac{\bx}{\sqrt{1+16\pi^2 t^2}}\right)\bigg|.
}
Recalling that $\Phi_\bm(\bx) = H_\bm(\sqrt{4\pi}\,\bx)e^{-\pi\|\bx\|^2}$ and $\la=\sqrt{2/p}$, we deduce that
\es{\label{hermite-change-variables}
& \int_\R \left(\int_\R |e^{i\Delta t}f(\bx)|^p\d\bx\right)^{q/p}\dt 
\\ & = \int_\R \left(\int_{\R^d}\left|\sum_\bm \alpha(\bm)\left(\sqrt{\frac{1-4\pi it}{1+4\pi it}}\right)^{|\bm|}\hspace{-2mm}\Phi_\bm\left(\frac{\bx}{\sqrt{1+16\pi^2 t^2}}\right)\right|^p\d\bx\right)^{q/p}\hspace{-4mm}\frac{\dt}{(1+16\pi^2 t^2)^{\frac{qd}{4}}}
\\ & =p^{-qd/2p}\int_\R \left(\int_{\R^d}\left|\sum_\bm \alpha(\bm)\left(\sqrt{\frac{1-4\pi it}{1+4\pi it}}\right)^{|\bm|} H_\bm(\la\by)\right|^p\d\ga_d(\by)\right)^{q/p}\frac{\dt}{1+16\pi^2 t^2} \\
& =(p^{-qd/2p}/4)\int_{-1/2}^{1/2} \left(\int_{\R^d}\left|\sum_\bm \alpha(\bm)e^{\pi i|\bm|s} H_\bm(\la\by)\right|^p\d\ga_d(\by)\right)^{q/p}\ds \\
& = (p^{-qd/2p}/4)\int_{-1/2}^{1/2}\|\H^{t/2} g\|^q_{L^p(\d\ga_d)}\dt,
}
where in the second identity above we did the change of variables $\by=\sqrt{\frac{2\pi p}{1+16\pi^2t^2}}\,\bx$ and used that $\frac{qd}{4}-\frac{qd}{2p}=1$, and in the third identity we did $\pi s=\arctan(-4\pi t)$. Similarly, we deduce that
\es{\label{hermite-change-variables-2}
\int_{\R^d} |f(\bx)|^2\d\bx = 2^{-d/2}\int_{\R^d} |g(\by)|^2\d\ga_d(\by).
}
%\smallskip

\noindent {\it Step 2.} We now need to define an auxiliary linear operator $T_\om: L^2(\R^d,\d\ga_d)\to L^2(\R^d,\d\ga_d)$ for any $\om\in\C$ with $|\om|\leq 1$ as follows
\es{\label{T-om-op}
T_\om(H_\bm)(\bx) = \om^{|\bm|}H_\bm(\bx).
}
This defines a group with respect to complex multiplication: $T_{\om_1\om_2}=T_{\om_1}T_{\om_2}$ and $T_\om$ is an isometric transformation if $|\om|=1$. Now, observe that by the definition of $\H^t$ in \eqref{herm-op-def} we have $\H^{t+s} = \H^t T_{e^{2\pi i s}}$ for all real $s,t$ and, since $H_n(-x)=(-1)^nH_n(x)$, we also have $T_{-1}g(\bx) = g(-\bx)$ for all $g(x)$ and we conclude that $\H^tT_{-1} = T_{-1}\H^t$ for all real $t$. All these considerations imply that $\H^{t-1/4}T_{-i} = T_{-1} \H^t$ and $\H^{t+1/4}T_{-i}=\H^t$ for all real $t$. For $g(\by)=\sum_\bm \alpha(\bm) H_\bm(\by)$ we obtain 
\es{\label{herm-T-trick}
& \int_{-1/2}^{1/2}\|\H^{t}T_{-i}g\|^q_{L^p(\d\ga_d)}\dt \\ & = \frac{1}{2}\int_{-1/2}^{1/2}\|\H^{t/2+1/4}T_{-i}g\|^q_{L^p(\d\ga_d)}\dt + \frac{1}{2}\int_{-1/2}^{1/2}\|\H^{t/2-1/4}T_{-i}g\|^q_{L^p(\d\ga_d)}\dt \\
& = \frac{1}{2}\int_{-1/2}^{1/2}\|\H^{t/2} g\|^q_{L^p(\d\ga_d)}\dt + \frac{1}{2}\int_{-1/2}^{1/2}\|T_{-1}\H^{t/2} g\|^q_{L^p(\d\ga_d)}\dt \\& = \int_{-1/2}^{1/2}\|\H^{t/2} g\|^q_{L^p(\d\ga_d)}\dt.
}
The Theorem \ref{equiv-thm-hermite} follows from \eqref{hermite-change-variables}, \eqref{hermite-change-variables-2} and \eqref{herm-T-trick}. 
\end{proof}

Before we prove Theorem \ref{L2-L2-estimate-hermite} we need a basic lemma about self-adjoint linear transformations and their norms. The proof can be done using the Spectral Theorem and we leave the details to the interested reader.

\begin{lemma}\label{norm-lemma}
Let $\OO: \B \to \B$ be a bounded self-adjoint linear transformation and $\B$ be a separable Hilbert space over $\C$ with Hermitian inner product $\langle \cdot,\cdot\rangle_\B$. Assume that there exists a real number $\theta>0$ such that
$$
|\langle \OO^n \bu,\bv\rangle_\B| \ll \theta^n
$$
for all $n\geq 1$ and all $\bu,\bv\in\B$, where the implied constant depends only on $\bu$ and $\bv$. Then
$$
\|\OO\|_{\B\to\B} := \sup_{\bu\neq 0} \frac{\|\OO\bu\|_\B}{\|\bu\|_\B}\leq \theta.
$$
Moreover, if in addition we have
$$
|\langle \OO^n \bu_0,\bv_0\rangle_\B|\gg\theta^n
$$
for some vectors $\bu_0$ and $\bv_0$ and for all $n\geq 1$ then
$$
\|\OO\|_{\B\to\B} = \theta.
$$
\end{lemma}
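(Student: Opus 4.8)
The plan is to treat the two assertions separately: first the norm bound $\|\OO\|_{\B\to\B}\le\theta$, and then, under the additional hypothesis, the reverse inequality $\|\OO\|_{\B\to\B}\ge\theta$.

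For the upper bound I would argue by contradiction using the Spectral Theorem for bounded self-adjoint operators. Set $r:=\|\OO\|_{\B\to\B}$ and suppose $r>\theta$. Since both $|\langle\OO^n\bu,\bv\rangle_\B|$ and $\|\OO\|_{\B\to\B}$ are unchanged when $\OO$ is replaced by $-\OO$, and since $\sigma(\OO)\subseteq[-r,r]$ with either $r$ or $-r$ lying in $\sigma(\OO)$, I may assume $r\in\sigma(\OO)$. Let $E$ be the projection-valued spectral measure of $\OO$, so that $\OO^n=\int_{[-r,r]}\lambda^n\,\d E(\lambda)$ for every $n\ge1$. Because the support of $E$ is $\sigma(\OO)$ and $\sigma(\OO)\subseteq[-r,r]$, the projection $E((r-\varepsilon,r])$ is nonzero for each $\varepsilon\in(0,r-\theta)$; picking a unit vector $\bu$ in its range, the scalar measure $\mu_{\bu}(A):=\|E(A)\bu\|_\B^2$ is a probability measure carried by $(r-\varepsilon,r]$, so that $\langle\OO^n\bu,\bu\rangle_\B=\int_{(r-\varepsilon,r]}\lambda^n\,\d\mu_{\bu}(\lambda)\ge(r-\varepsilon)^n$ for all $n\ge1$ (here $r-\varepsilon>\theta>0$). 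Applying the hypothesis with $\bv=\bu$ then forces $((r-\varepsilon)/\theta)^n$ to remain bounded in $n$, which is impossible since $r-\varepsilon>\theta$. Hence $\|\OO\|_{\B\to\B}\le\theta$.

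For the reverse inequality only Cauchy--Schwarz is needed. The hypothesis $|\langle\OO^n\bu_0,\bv_0\rangle_\B|\gg\theta^n$ forces $\bu_0\neq0$ and $\bv_0\neq0$, and supplies a constant $c>0$ with $c\,\theta^n\le|\langle\OO^n\bu_0,\bv_0\rangle_\B|\le\|\OO^n\bu_0\|_\B\,\|\bv_0\|_\B\le\|\OO\|_{\B\to\B}^n\,\|\bu_0\|_\B\,\|\bv_0\|_\B$ for all $n\ge1$. Taking $n$-th roots and letting $n\to\infty$ yields $\theta\le\|\OO\|_{\B\to\B}$, which combined with the previous paragraph gives $\|\OO\|_{\B\to\B}=\theta$.

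The only genuinely delicate point is the upper bound, and it is the reason the Spectral Theorem (or, alternatively, two applications of the Uniform Boundedness Principle) is really used: the hypothesis provides only the pointwise estimate $|\langle\OO^n\bu,\bv\rangle_\B|\ll\theta^n$ with a constant depending on $\bu$ and $\bv$, so one cannot directly conclude the uniform bound $\|\OO^n\|_{\B\to\B}\ll\theta^n$ and take roots. Localizing to a spectral subspace concentrated near an extreme point of the spectrum is precisely the device that converts this non-uniform pointwise information into a contradiction.
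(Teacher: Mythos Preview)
Your proof is correct and follows exactly the route the paper indicates: the paper does not give a detailed argument but simply states that ``the proof can be done using the Spectral Theorem and we leave the details to the interested reader,'' and your spectral-localization argument for the upper bound together with the Cauchy--Schwarz argument for the lower bound supplies precisely those details.
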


The linear operators $T_\om$ defined in \eqref{T-om-op} for $|w|\leq 1$  also play an important role in the proof of Theorem \ref{L2-L2-estimate-hermite}. These operators are given by the following Mehler Kernel (see \cite[p. 163]{Be})
\es{\label{Mehler-kernel}
T_\om(\bx,\by) & = \frac{1}{(1-\om^2)^{d/2}} \exp\bigg[\frac{-\om^2(\|\bx\|^2 + \|\by\|^2) }{2(1-\om^2)} + \frac{\om \bx\cdot\by}{1-\om^2}\bigg] \\ & =
\sum_{\bm\in \Z^d_+}\om^{|\bm|} \frac{H_\bm(\bx)H_\bm(\by)}{\bm!},
}
where the convergence of the above series (for fixed $\bx$ and $\by$) is absolute for $|\om|<1$. In other words, for all $g\in L^2(\R^d,\d\ga_d)$ we have
$$
T_\om g(\bx) = \int_{\R^d} T_\om(\bx,\by)g(\by)\d\ga_d(\by).
$$
At this point we recommend the reader to recall the notation introduced in Section \ref{hermite-case}.

\begin{proof}[\bf Proof of Theorem \ref{L2-L2-estimate-hermite}]

\noindent {\it Step 1.} The identities in \eqref{Integral-id-hermite} easily follow from the definitions of the space $\F$ and the operator $\PP$ in conjunction with identities \eqref{gen-ident-hermite} and \eqref{gen-ident-hermite-2}. 
%\smallskip

\noindent {\it Step 2.}  Let $S\ge 0$. Our goal is to explicitly compute the $n$-fold composition $\PP^n$ of the operator $\PP$ for any $n\geq 0$. In general, for any $n\geq 0$, if $\p\in\F$ and $M\in\M$ with $|M|=S$ we have
\es{\label{Pn-op}
\PP^n\p(M) = \sum_{\stackrel{N\in\M}{|N|=S}}\p(N) \frac{P^{(n)}(M,N)}{M!},
}
for some coefficients $P^{(n)}(M,N)$. The idea of the proof is the following: (a) Give a nice representation of these coefficients $P^{(n)}(M,N)$ in terms of a certain multiplication operator; (b) Show that $P^{(2)}(M,N)=P(M,N)$ in the case $(k-1)d\ell/2 = 1$, hence $\PP^2=\PP$; (c) Use Lemma \ref{norm-lemma} to exactly compute the norm of $\PP$ restricted to $\F_S$ when $\mu = (k-1)d\ell/2\neq 1$.

We start with $n=2$. In this case, if $\p\in\F$ and $M\in\M$ with $|M|=S$ we have
\es{\label{P2-op}
\PP^2\p(M) & = \sum_{\stackrel{N\in\M}{|N|=S}} \PP\p(N)\frac{P(M,N)}{M!} = \sum_{\stackrel{N\in\M}{|N|=S}} \bigg(\sum_{\stackrel{L\in\M}{|L|=S}}\p(L)\frac{P(N,L)}{N!}\bigg)\frac{P(M,N)}{M!} \\ 
& =  \sum_{\stackrel{L\in\M}{|L|=S}}\p(L) \frac{P^{(2)}(M,L)}{M!},
}
where
\es{\label{P2-coeff}
P^{(2)}(M,L) = \sum_{\stackrel{N\in\M}{|N|=S}} \frac{P(M,N)P(N,L)}{N!}.
}
To investigate the above coefficients we need to define a new kernel involving Hermite polynomials. For any $M=[\bm^{i,j}]\in \M$ and any collection of vectors $\{\bx^1,{{...}},\bx^\ell\}$ in $\R^d$ let
\est{%\label{FM-def}
H_M(\bx^1,{{...}},\bx^\ell) = \prod_{i=1}^\ell\prod_{j=1}^k H_{\bm^{i,j}}(\la\bx^i)
}
(recall that $\la=\sqrt{2/p}=\sqrt{1/k}$). For any $S\geq 0$ define the following kernel
\es{\label{KS-kernel}
K_S(\bx^1,{{...}},\bx^\ell;\by^1,{{...}},\by^\ell)  :=\sum_{\stackrel{M=[\bn^{i,j}]\in \M}{|M|=S}} \frac{H_M(\bx^1,{{...}},\bx^\ell)H_M(\by^1,{{...}},\by^\ell)}{M!}.
}
Also define the associated operator
\est{%\label{KS-operator}
\K_S[g](\bx^1,{{...}},\bx^\ell) = \int_{(\R^d)^\ell} K_S(\bx^1,{{...}},\bx^\ell;\by^1,{{...}},\by^\ell) g(\by^1,{{...}},\by^\ell)\d\ga_d(\by^1){{...}}\d\ga_d(\by^\ell),
}
for any $g\in L^2((\R^d)^\ell,\d\ga_d(\by^1){{...}}\d\ga_d(\by^\ell))$. Using \eqref{P-coeff}, \eqref{P2-op} and \eqref{P2-coeff}, we obtain that
\est{
P^{(2)}(M,L)  = \int_{(\R^d)^\ell}  H_M(\bx^1,{{...}},\bx^\ell)\K_S[H_L](\bx^1,{{...}},\bx^\ell)\d\ga_d(\bx^1){{...}}\d\ga_d(\bx^\ell)
}
for any $M=[\bm^{i,j}]$ and $L=[\bl^{i,j}]$ in $\M$. In an analogous way, if $\p\in\F$ and $M\in\M$ with $|M|=S$ we have the following representation for the coefficient in \eqref{Pn-op}
\es{\label{Pn-coeff}
P^{(n)}(M,L) = \int_{(\R^d)^\ell}  H_M(\bx^1,{{...}},\bx^\ell)\K_S^{(n-1)}[H_L](\bx^1,{{...}},\bx^\ell)\d\ga_d(\bx^1){{...}}\d\ga_d(\bx^\ell),
}
where $\K_S^{(n-1)}$ is the $(n-1)$-fold composition of $\K_S$.

\noindent {\it Step 3.} Let $\mu=(k-1)d\ell/2$. We claim that the kernel $K_S$ in \eqref{KS-kernel} has the following alternative form
\es{\label{KS-alt-form}
& K_S(\bx^1,{{...}},\bx^\ell;\by^1,{{...}},\by^\ell) \\ & = \sum_{s=0}^{\lfloor S/2\rfloor} \frac{(\mu)_{s}}{s!}\sum_{\stackrel{\sum_{i}|\bo m_{i}| = S-2s}{\bm_i\in\Z_+^d}} \frac{H_{\bo m_1}(\bx^1){{...}}H_{\bo m_\ell}(\bx^\ell)H_{\bo m_1}(\by^1){{...}}H_{\bo m_\ell}(\by^\ell)}{\bo m_1!{{...}}\bo m_\ell!},
}
where $(\mu)_s =\frac{\Gamma(\mu+s)}{\Gamma(\mu)}$ is the Pochhammer symbol. In particular, for all $n\geq 1$ we have that
\es{\label{KS-iteration}
& \K_S^{(n-1)}[g](\bx^1,{{...}},\bx^\ell) \\ & = \sum_{s=0}^{\lfloor S/2\rfloor}\bigg(\frac{(\mu)_s}{s!}\bigg)^{n-1}\sum_{\stackrel{\sum_{i}|\bo m_{i}| = S-2s}{\bm_i\in\Z_+^d}} \alpha(\bm_1,{{...}},\bm_\ell)H_{\bo m_1}(\bx^1){{...}}H_{\bo m_\ell}(\bx^\ell),
}
if $g(\bx^1,{{...}},\bx^\ell)$ is a polynomial of the following form
\es{\label{F-special-form}
g(\bx^1,{{...}},\bx^\ell) = \sum_{s=0}^{\lfloor S/2\rfloor}\sum_{\stackrel{\sum_{i}|\bo m_{i}| = S-2s}{\bm_i\in\Z_+^d}} \alpha(\bm_1,{{...}},\bm_\ell)H_{\bo m_1}(\bx^1){{...}}H_{\bo m_\ell}(\bx^\ell).
}

To prove the claim \eqref{KS-alt-form} we use the Mehler kernel \eqref{Mehler-kernel}. Recall that $\la=\sqrt{1/k}$. For any $|\om|<1$ we obtain that
\begin{align*}
&\sum_{S\geq 0} K_S(\bx^1,{{...}},\bx^\ell;\by^1,{{...}},\by^\ell)\om^S = \sum_{M\in\M} \frac{\prod_{i=1}^\ell\prod_{j=1}^k H_{\bm^{i,j}}(\la \bx^i)H_{\bm^{i,j}}(\la \by^i)}{M!}\om^{|M|} \\ & = \bigg(\prod_{i=1}^\ell T_\om(\la\bx^i,\la\by^i)\bigg)^k = (1-\om^2)^{-\mu} \prod_{i=1}^\ell T_\om(\bx^i,\by^i)
\\ & = \bigg(\sum_{a\geq 0} \frac{(\mu)_a}{a!}\om^{2a}\bigg)\bigg(\sum_{b\geq 0}\bigg[\sum_{\stackrel{\sum_{i}|\bo m_{i}| = b}{\bo m_i\in\Z_+^d}}\frac{H_{\bo m_1}(\bx^1)H_{\bo m_1}(\by^1)}{\bo m_1!}{{...}}\frac{H_{\bo m_\ell}(\bx^\ell)H_{\bo m_\ell}(\by^\ell)}{\bo m_\ell!}\bigg]\om^{b}\bigg) \\
& = \sum_{S\geq 0}\bigg\{\sum_{a=0}^{\lfloor S/2\rfloor} \frac{(\mu)_{a}}{a!}\sum_{\stackrel{\sum_{i}|\bo m_{i}| = S-2a}{\bm_i\in\Z_+^d}} \frac{H_{\bo m_1}(\bx^1){{...}}H_{\bo m_\ell}(\bx^\ell)H_{\bo m_1}(\by^1){{...}}H_{\bo m_\ell}(\by^\ell)}{\bo m_1!{{...}}\bo m_\ell!}\bigg\}\om^S.
\end{align*}
The claim follows by from comparing the coefficients of the power series of the first and last expressions.
%\smallskip

\noindent {\it Step 4.} Let $M\in\M$ with $|M|=S$. It is easy to see that $H_M(\bx^1,{...},\bx^\ell)$ has an expansion in terms of Hermite polynomials of the following form
$$
H_M(\bx^1,{{...}},\bx^\ell) = \sum_{s=0}^{S}\sum_{\stackrel{\sum_{i}|\bo n_{i}| = S-s}{\bn_i\in\Z_+^d}} \alpha_M(\bn_1,{{...}},\bn_\ell)H_{\bo n_1}(\bx^1){{...}}H_{\bo n_\ell}(\bx^\ell).
$$
However, since $H_M(-\bx^1,{{...}},-\bx^\ell)=(-1)^SH_M(\bx^1,{{...}},\bx^\ell)$ (recall that $H_n(-x)=(-1)^nH_n(x)$), we deduce that $\alpha_M(\bn_1,{{...}},\bn_\ell)=0$ if $s$ is not even, where $S-s=\sum_{i}|\bo n_{i}|$. We conclude that $H_M(\bx^1,{{...}},\bx^\ell)$ has the special form \eqref{F-special-form}, which by \eqref{KS-iteration} implies that $\K_S[H_M] = H_M$ if $\mu=(k-1)d\ell/2=1$. Using identity \eqref{Pn-coeff} we conclude that
$$
P^{(2)}(M,N)=P(M,N)
$$
for all $M,N\in\M$, if $\mu=1$. We deduce that the operator $\PP_S$ (the restriction of $\PP$ to the subspace $\F_S$) is a projection for any $S\geq 0$. Since the spaces $\F_S$ are orthogonal and their span is dense in $\F$, we deduce that $\PP$ is well-defined in the whole $\F$ and it is also a projection.
%\smallskip

\noindent {\it Step 5.} It remains to calculate the norm of $\PP_S$ on the space $\F_S$ for $\mu=(k-1)d\ell/2>1$. By the considerations of the previous step, the fact that the any function $H_L$ for $L\in\M$ has the special form \eqref{F-special-form} allow us to apply H\"older's inequality in \eqref{Pn-coeff} and obtain that
\es{\label{Pn-coeff-est}
& |P^{(n)}(M,L)|^2\\ 
& \ll \int_{(\R^d)^\ell} | \K_S^{(n-1)}[H_L](\bx^1,{{...}},\bx^\ell)|^2 \d\ga_d(\bx^1){{...}}\d\ga_d(\bx^\ell) \\
& = \sum_{s=0}^{\lfloor S/2\rfloor}\bigg( \frac{(\mu)_s}{s!}\bigg)^{2n-2}\sum_{\stackrel{\sum_{i}|\bo n_{i}| = S-2s}{\bn_i\in\Z_+^d}} \frac{|\alpha_L(\bn_1,{{...}},\bn_\ell)|^2}{\bn_1!{{...}}\bn_\ell!} \ll \bigg( \frac{(\mu)_{\lfloor S/2\rfloor}}{\lfloor S/2\rfloor!}\bigg)^{2n-2},
}
for any $M,L\in\M$ and all $n\geq 1$, where the implied constants do not depend on $n$. Note that in \eqref{Pn-coeff-est} we used the fact that the map $s\mapsto \frac{(\mu)_s}{s!}$ is increasing for $s>0$ if $\mu>1$. We can now apply Lemma \ref{norm-lemma} to deduce that 
$$
\|\PP_S\|_{\F_S\to\F_S} \leq  \frac{(\mu)_{\lfloor S/2\rfloor}}{\lfloor S/2\rfloor!}.
$$

\noindent {\it Step 6.} Now we show the reverse estimate found in \eqref{Pn-coeff-est} for some matrix $M=L=M_0$. It is simple to see that we can choose $M_0\in\M$ such that
$$
H_{M_0}(\bx^1,{{...}},\bx^\ell) = H_S(\la x_1^1),
$$
where $\bx^i=(x_1^i,{{...}},x_d^i)$ for $i=1,..,\ell$. It is known that
$$
H_n(\la x) = \sum_{a=0}^n \binom n a \la^a(1-\la^2)^{(n-a)/2}H_{n-a}(0)H_a(x), 
$$
for all $n\geq 0$ and that $H_n(0)\neq 0$ if and only if $n$ is even. We deduce that
\est{%\label{even-int-herm}
\int_{\R} H_n(\la x)\d\ga(x) = (1-\la^2)^{n/2}H_n(0) \neq 0,
}
if $n$ is even and
\est{%\label{odd-int-herm}
\int_{\R} H_{n}(\la x)H_1(x)\d\ga(x) = n\la(1-\la^2)^{(n-1)/2}H_{n-1}(0) \neq 0
}
if $n$ is odd (recall that $\la=\sqrt{1/k}<1$ since $k\geq 2$). We conclude that
$$
H_{M_0}(\bx^1,{{...}},\bx^\ell) = \sum_{s=0}^{S}\sum_{\stackrel{\sum_{i}|\bo n_{i}| = S-2s}{\bn_i\in\Z_+^d}} \alpha_{M_0}(\bn_1,{{...}},\bn_\ell)H_{\bo n_1}(\bx^1){{...}}H_{\bo n_\ell}(\bx^\ell),
$$
where $\alpha_{M_0}(\bf 0,{{...}},\bf 0)\neq 0$ if $S$ is even and $\alpha_{M_0}((1,0,{{...}},0),\bf 0,{{...}},\bf 0)\neq 0$ if $S$ is odd. We can now use \eqref{Pn-coeff} and \eqref{KS-iteration} to deduce that
\est{
P^{(n)}(M_0,M_0) = \sum_{s=0}^{\lfloor S/2\rfloor}\bigg( \frac{(\mu)_s}{s!}\bigg)^{n-1}\sum_{\stackrel{\sum_{i}|\bo n_{i}| = S-2s}{\bn_i\in\Z_+^d}} \frac{|\alpha_{M_0}(\bn_1,{{...}},\bn_\ell)|^2}{\bn_1!{{...}}\bn_\ell!} \gg  \bigg( \frac{(\mu)_{\lfloor S/2\rfloor}}{\lfloor S/2\rfloor!}\bigg)^{n-1}.
}
We can now apply Lemma \ref{norm-lemma} again to finally conclude that
$$
\|\PP_S\|_{\F_S\to\F_S} = \frac{(\mu)_{\lfloor S/2\rfloor}}{\lfloor S/2\rfloor!}.
$$
This finishes the proof.
\end{proof}

\begin{proof}[\bf Proof of Theorm \ref{equiv-thm-laguerre}]
This proof is analogous to the proof of Theorem \ref{equiv-thm-hermite}. The first step is almost identical but now using the following change of variables: $\by=\sqrt{\frac{\pi p}{1+16\pi^2t^2}}\,\bx$ and $\pi s=\arctan(-4\pi t)$. However, in this case we do not need to use an extra step to transform the multiplication factor $e^{\pi im s }$ into $e^{2\pi im s }$, since it naturally emerges from the form of $e^{i\Delta t}\Psi_m(\bx)$ proved in Lemma \ref{SE-flow-lague-herm}. The different thing here is that the factor $\frac{1-4\pi it}{1+4\pi it}$ has no longer a square root on it.
\end{proof}

\begin{proof}[\bf Proof of Theorem \ref{QS-op-thm}]
\noindent {\it Step 1.} Identity \eqref{int-id-G} easily follows from identities \eqref{crucial-id-laguerre} and \eqref{crucial-id-laguerre-2}.

\noindent {\it Step 2.} We now show show the facts about matrix $\Q_S = [Q(a,S-a,c,S-c)]_{a,c=0,{{...}},S}$ in \eqref{matrix-rep-Q}. The fact that $Q(a,b,c,d)>0$ for all integers $a,b,c,d\geq 0$ is shown in Theorem \ref{Q-facts}. Note that if $a+b=c+d=S$, the explicit formula for $Q(a,b,c,d)$ in \eqref{explict-comb-form} can be written in the following form 
$$
Q(a,b,c,d) = \sum_{u=0}^S F_u(a,S-a)F_u(c,S-c)
$$
where
$$
F_u(a,S-a) = \frac{(S-u)!u!}{a!(S-a)!2^{S}}\bigg(\sum_{r=0}^u(-1)^r\binom a r \binom {S-a} {u-r}\bigg)^2.
$$
Thus, if we define $F_S = [F_a(c,S-c)]_{a,c=0,...,S}$ we obtain $\Q_S = F_S^*F_S$. This implies that $\Q_S$ is positive semi-definite. Now we show that $\Q_S$ is doubly stochastic. For any $\nu>-1$, the Laguerre polynomials satisfy the following summation formula
\es{\label{lague-mult-thm}
L_N^{(a\nu+a-1)}(x_1+{{...}}+x_a) = \sum_{n_1+{{...}}+n_a=N} L^{(\nu)}_{n_1}(x_1){{...}}L^{(\nu)}_{n_a}(x_a).
}
We also have that
\es{\label{lague-diff-thm}
L_n^{(\nu+1)}(x) - L_{n-1}^{(\nu+1)}(x) = L_{n}^{{(\nu)}}(x),
}
for all $n\geq 1$. Recalling that we denote $L^{(0)}_n(x)=L_n(x)$ for simplicity, we can use \eqref{lague-mult-thm} to obtain that
\est{%\label{QS-sum-cal}
\sum_{a+b=S} Q(a,b,c,d) & = \int_0^\infty \bigg\{\sum_{a+b=S} L_a(x/2) L_b(x/2)\bigg\}L_c(x/2) L_d(x/2)e^{-x}\dx \\ & = \int_0^\infty L^{(1)}_S(x)L_c(x/2) L_d(x/2)e^{-x}\dx \\
& = \int_0^\infty L^{(1)}_S(x)[L_c(x/2) L_d(x/2)-1]e^{-x}\dx + \int_0^\infty L^{(1)}_S(x)e^{-x}\dx \\
& = \int_0^\infty L^{(1)}_S(x)e^{-x}\dx.
}
In the last identity we used that $L_n(0)=1$ for all $n\geq 0$ and that 
$$
\int_0^\infty L^{(1)}_S(x)P(x)xe^{-x}\dx=0,
$$
for any polynomial $P(x)$ of degree at most $S-1$. Next, we can apply \eqref{lague-diff-thm} to conclude by induction that
\est{
\int_0^\infty L^{(1)}_S(x)e^{-x}\dx & = \int_0^\infty \{L_{S-1}^{(1)}(x) + L_{S}(x)\}e^{-x}\dx = \int_0^\infty L_{S-1}^{(1)}(x)e^{-x}\dx\\
& = \int_0^\infty \{L_{S-2}^{(1)}(x) + L_{S-1}(x)\}e^{-x}\dx =  \int_0^\infty L_{S-2}^{(1)}(x)e^{-x}\dx\\
& = \int_0^\infty L_{S-3}^{(1)}(x)e^{-x}\dx = {{...}} = \int_0^\infty L_{0}^{(1)}(x)e^{-x}\dx \\
& = \int_0^\infty e^{-x}\dx = 1.
}
{\it Step 3.} Clearly, since each $\Q_S$ is doubly stochastic, any function $\p\in\G$ with the property that $\p(a,b)=\p(c,d)$ if $a+b=c+d$ is a fixed point of $\Q$, that is, $\Q\p=\p$. Assume now that $\p\in\G$ is a non-zero function such that 
$$
\langle \p,\Q\p\rangle_\G = \|\p\|^2_\G.
$$
We conclude that $\|\Q\p\|^2_\G=\|\p\|^2_\G$. Since the spaces $\G_S$ are mutually orthogonal, $\Q(\G_S)\subset \G_S$ and their span is dense in $\G$, we deduce that 
$$
\sum_{S\geq 0} \|\Q_S(\p_S)\|^2_\G  = \|\Q\p\|^2_\G = \|\p\|_\G^2 = \sum_{S\geq 0} \|\p_S\|^2_\G,
$$
where $\p_S\in\G_S$ is the projection of $\p$ in $\G_S$ (that is, $\p_S(a,b)=\p(a,b)$ if $a+b=S$ and $\p_S(a,b)=0$ otherwise). This implies that $\|\Q_S(\p_S)\|^2_\G=\|\p_S\|^2_\G$ for every $S\geq 0$. Since the matrix \eqref{matrix-rep-Q} that represents $\Q_S$ is positive semi-definite and has strictly positive entries, the Ostrowski's Theorem (see \cite[15.820]{GR}) guarantees that all the eigenvalues of $\Q_S$ other than $1$ are non-negative and strictly less than $1$, hence the vector $(\p(0,S),\p(1,S-1),...,\p(S,0))$ must be a multiple of $(1,1,...,1)$. This finishes the proof.
\end{proof}

\begin{proof}[\bf Proof of Corollary \ref{radial-St-sharp}]
The inequality \eqref{radial-St-ineq} easily follows from Theorem \ref{equiv-thm-laguerre} for $d=2$ and $p=q=4$ and Theorem \ref{QS-op-thm} identities \eqref{int-id-G} and \eqref{int-id-G-2}. Assume that $f(\bx)=\sum_{n\geq 0} \alpha(n)\Psi_n(\bx)$ is a non-zero radial function that maximizes \eqref{radial-St-ineq}. We can use Theorem \ref{QS-op-thm} again to deduce that $\p(a,b)=\alpha(a)\alpha(b)$ belongs to $\G$, is not identically zero and satisfies $\|\Q\p\|_\G=\|\p\|_\G$. We conclude that $\p(a,b)$ depends only on $a+b$. It is then a simple task to verify by induction on $a+b$ that
$$
\p(a,b)=\alpha(0)^2\left(\frac{\alpha(1)}{\alpha(0)}\right)^{a+b}
$$
for all $a,b\geq 0$ and $\alpha(0)\neq 0$. Moreover, since the norm of $\p$ in $\G$ is finite, we must have $\left|\frac{\alpha(1)}{\alpha(0)}\right|<1$. This implies that $\alpha(n)=\alpha(0)\left(\frac{\alpha(1)}{\alpha(0)}\right)^n$ for all $n\geq 0$. We can now use the generating function for the Laguerre polynomials
$$
\frac{e^{-x\om/(1-\om)}}{1-\om} = \sum_{n\geq 0} L_n(x)\om^n
$$
for $\om=\frac{\alpha(1)}{\alpha(0)}$ to deduce that $f(\bx)=\alpha(0)e^{-\pi\frac{1+\om}{1-\om}\|\bx\|^2}$, where $\re \frac{1+\om}{1-\om} = \frac{1-|\om|^2}{|1-\om|^2}>0$. This finishes the proof.
\end{proof}

\subsection{The Spherical Harmonics and Gegenbauer Polynomials Part}

\begin{proof}[\bf Proof of Theorem \ref{radial-id-thm}]
We make use of the Delta Calculus to reduce the problem to a bi-linear form on the sphere $L^2(\Sp^{d-1})$ (we refer to \cite{DF} for a short review of the basics aspects). Let $f\in L^2(\R^d,\d\bx)$ be a smooth function and define $g(r,\bxi) = \ft f(r\bxi)$ for any $r>0$ and $\bxi\in \Sp^{d-1}$. We obtain

\begin{align*}%\label{radial-delta-calculation}
& \int_\R \int_{\R^d} |e^{it\Delta}f(\bx)|^2 \frac{\d\bx}{\|\bx\|^2}\dt \\ & = \int_\R \int_{\R^d} \bigg(\int_{(\R^d)^2} \ft f(\by) \ov{\ft f(\bz)} e^{-4\pi^2 it(\|\by\|^2-\|\bz\|^2)+2\pi i\bx\cdot(\by-\bz)}\d\by\d\bz\bigg)\frac{\d\bx}{\|\bx\|^2} \dt \\
& = \frac{\Gamma(d/2-1)}{\pi^{d/2-2}}\int_{(\R^d)^2} \ft f(\by) \ov{\ft f(\bz)} {\bo \delta}\big(2\pi(\|\by\|^2-\|\bz\|^2)\big) \frac{\d\by\d\bz}{\|\by-\bz\|^{d-2}} \\
& = \frac{\Gamma(d/2-1)}{2\pi^{d/2-1}}\int_{(\R^d)^2} \ft f(\by) \ov{\ft f(\bz)} {\bo \delta}\big(\|\by\|-\|\bz\|\big) \frac{\d\by\d\bz}{(\|\by\|+\|\bz\|)\|\by-\bz\|^{d-2}} \\
& = \frac{\pi}{(d-2)|\Sp^{d-1}|}\int_0^\infty \int_{\Sp^{d-1}} g(r,\bxi)\bigg\{ \int_{\Sp^{d-1}} \ov{g(r,\bze)} \frac{\d\bze}{\|\bxi-\bze\|^{d-2}} \bigg\} \d\bxi r^{d-1}\d r,
\end{align*}
where in the second identity above we used that $\|\cdot\|^{-2} \mapsto \frac{\Gamma(d/2-1)}{\pi^{d/2-2}}\|\cdot\|^{2-d}$ via the Fourier Transform \eqref{FT}. 

This proves identity \eqref{radial-identi-operat-R}. The above calculation begs us to study the boundedness of the operator $R$ defined in \eqref{radial-operator} which has the following alternative form
\est{%\label{radial-operator-alt}
R(g)(\bxi) =  \frac{1}{2^{d/2-1}}\int_{\Sp^{d-1}} {g(\bze)} \frac{\d\bze}{(1-\bxi\cdot\bze)^{d-2}}.
}
We now prove inequality \eqref{R-sharp-ineq}. Any function $g\in L^2(\Sp^{d-1})$ can be decomposed in the following form
$$
g(\bxi) = \sum_{n\geq 0} Y_n(\bxi),
$$
where $Y_n(\bxi)$ is a spherical harmonic of degree $n$. We note that the set $\{Y_n(\bxi)\}_{n\geq 0}$ is always orthogonal in $L^2(\Sp^{d-1})$ and that $Y_0(\bxi) \equiv$ [the mean of $g(\bxi)$ over $\Sp^{d-1}$]. In particular, this implies that
\es{\label{dist-to-const-formula}
{\rm Dist}(g,{\rm Const})^2 = \sum_{n\geq 1} \|Y_n\|^2_{L^2(\Sp^{d-1})}.
}
If $g\equiv Y_n$, we can calculate $R(g)(\bxi)$ by using the Funk-Hecke formula (see \cite[Theorem 1.2.9]{DX})
\es{\label{funk-hecke-formula}
R(Y_n)(\bxi) & = \frac{1}{2^{\nu}}\int_{\Sp^{d-1}} Y_n(\bze) F(\bxi\cdot \bze)\d\bze \\ & = \bigg(\frac{|S^{d-2}|}{2^{\nu}C^{\nu}_n(1)}\int_{-1}^1 C^{\nu}_n(u)F(u)(1-u^2)^{\nu-1/2}\bigg)  Y_n(\bxi),
}
where $F(u)=(1-u)^{-\nu}$ and $\{C^{\nu}_n(u)\}_{n\geq 0}$ are the Gegenbauer polynomials with parameter $\nu=d/2-1$ (in fact, the Funk-Hecke calculation states that the above formula holds for any reasonable $F(u)$). 

For any $\nu>-1/2$, the Gegenbauer polynomials are defined as the orthogonal polynomials with respect to the measure $(1-u^2)^{\nu-1/2}\du$ ($u\in(-1,1)$) and normalized by the condition
\es{\label{C-at-1}
C^\nu_n(1) = \frac{\Gamma(n+2\nu)}{\Gamma(2\nu)n!}.
}
We can use their relation with the Jacobi polynomials \cite[8.962-4]{GR} together with formula \cite[7.391-4]{GR} (for $\rho=\nu-1/2-a, \alpha=\beta=\nu-1/2$) to deduce that
\es{\label{magic-formula-Geg}
&\frac{1}{C^{\nu}_n(1)}\int_{-1}^1 C^{\nu}_n(u)(1-u)^{-a}(1-u^2)^{\nu-1/2} \du\\
& = 2^{2\nu-a}\frac{\Gamma(\nu+1/2)\Gamma(\nu+1/2-a)\Gamma(n+a)}{\Gamma(a)\Gamma(2\nu+n+1-a)}
}
for any $a<1/2+\nu$ and any $\nu>-1/2$. We can now use this formula at $a=\nu=d/2-1$ in conjunction with \eqref{funk-hecke-formula} and \eqref{C-at-1} to deduce that
$$
R(Y_n)(\bxi) = \frac{d-2}{2n+d-2}|\Sp^{d-1}|Y_n(\bxi)
$$
for all $n\geq 0$. We conclude that for $g(\bxi) = \sum_{n\geq 0} Y_n(\bxi)$ we have
\begin{align*}
\int_{\Sp^{d-1}} g(\bxi) \ov{R(g)(\bxi)} \d\bxi & = |\Sp^{d-1}|\sum_{n\geq 0}  \frac{d-2}{2n+d-2}\|Y_n\|^2_{L^2(\Sp^{d-1})} \\
& = |\Sp^{d-1}|\bigg\{\|g\|^2_{L^2(\Sp^{d-1})}  - \sum_{n\geq 1}  \frac{2n}{2n+d-2}\|Y_n\|^2_{L^2(\Sp^{d-1})}\bigg\}\\
& \leq|\Sp^{d-1}|\bigg\{\|g\|^2_{L^2(\Sp^{d-1})}  - \frac{2}{d}\sum_{n\geq 1} \|Y_n\|^2_{L^2(\Sp^{d-1})}\bigg\} \\
& = |\Sp^{d-1}|\bigg\{\|g\|^2_{L^2(\Sp^{d-1})}  - \frac{2}{d}{\rm Dist}(g,{\rm Const})^2\bigg\}.
\end{align*}
This concludes the theorem.  
\end{proof}

\begin{proof}[\bf Proof of Theorem \eqref{radial-thm}]
It is easy to see that Theorem \ref{radial-thm} will follow from Theorem \ref{radial-id-thm} once we prove the following lemma.

\begin{lemma}%\label{dist-evaluation}
Let $f\in L^2(\R^d)$ and write $g(r,\bxi) = f(r\bxi)$ for any $r>0$ and $\bxi\in \Sp^{d-1}$. Then we have
$$
{\rm Dist}(f,{\rm Radial})^2 = \int_0^\infty {\rm Dist}(g(r,\cdot),{\rm Const})^2 r^{d-1}\d r.
$$
\end{lemma}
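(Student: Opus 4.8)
The plan is to recognize the left-hand side as the distance from $f$ to the closed subspace of radial functions in $L^2(\R^d,\d\bx)$ and to compute it by slicing in the radial variable; the point is that orthogonal projection onto radial functions is nothing but averaging over spheres. Passing to polar coordinates $\bx = r\bxi$ with $r>0$ and $\bxi\in\Sp^{d-1}$, Fubini's theorem gives $\|f\|_{L^2(\R^d)}^2 = \int_0^\infty \|g(r,\cdot)\|_{L^2(\Sp^{d-1})}^2\, r^{d-1}\,\d r$, so in particular $g(r,\cdot)\in L^2(\Sp^{d-1})$ for almost every $r$, and since the radial subspace is closed, ${\rm Dist}(f,{\rm Radial})^2 = \inf\|f-h\|_{L^2(\R^d)}^2$, the infimum being over all radial $h\in L^2(\R^d,\d\bx)$.

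First I would show that ${\rm Dist}(f,{\rm Radial})^2$ is at least the right-hand side. A radial function $h$ has the form $h(r\bxi)=\phi(r)$, i.e.\ it is constant on each sphere $r\Sp^{d-1}$, so
\[
\|f-h\|_{L^2(\R^d)}^2 = \int_0^\infty \|g(r,\cdot)-\phi(r)\|_{L^2(\Sp^{d-1})}^2\, r^{d-1}\,\d r \ \ge\ \int_0^\infty {\rm Dist}(g(r,\cdot),{\rm Const})^2\, r^{d-1}\,\d r,
\]
the inequality holding for each fixed $r$ because $\phi(r)$ is one admissible constant on $\Sp^{d-1}$; taking the infimum over $h$ gives the claim. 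For the reverse inequality I would exhibit the optimal competitor explicitly: in $L^2(\Sp^{d-1})$ the best constant approximation to $g(r,\cdot)$ is its spherical mean $\bar g(r) := |\Sp^{d-1}|^{-1}\int_{\Sp^{d-1}} g(r,\bze)\,\d\bze$, since $g(r,\cdot)-\bar g(r)$ is orthogonal to the constants; hence ${\rm Dist}(g(r,\cdot),{\rm Const})^2 = \|g(r,\cdot)-\bar g(r)\|_{L^2(\Sp^{d-1})}^2$, and by Cauchy--Schwarz $\int_0^\infty |\bar g(r)|^2 r^{d-1}\,\d r \le |\Sp^{d-1}|^{-1}\|f\|_{L^2(\R^d)}^2 < \infty$, so $h_0(r\bxi):=\bar g(r)$ is a genuine radial element of $L^2(\R^d,\d\bx)$. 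For $h=h_0$ the displayed chain becomes an equality, giving ${\rm Dist}(f,{\rm Radial})^2 \le \|f-h_0\|_{L^2(\R^d)}^2 = \int_0^\infty {\rm Dist}(g(r,\cdot),{\rm Const})^2\, r^{d-1}\,\d r$, and the lemma follows.

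I do not anticipate a real obstacle: the only point requiring a little care is the measurability of $r\mapsto\bar g(r)$ and of $r\mapsto{\rm Dist}(g(r,\cdot),{\rm Const})^2$, which is routine Fubini bookkeeping, the substance being just the structural fact that orthogonal projection onto the radial subspace coincides with averaging over spheres. Finally, to complete the proof of Theorem \ref{radial-thm} I would substitute \eqref{R-sharp-ineq} into the right-hand side of \eqref{radial-identi-operat-R} (applied with $g(r,\bxi)=\ft f(r\bxi)$), integrate against $r^{d-1}\,\d r$, and apply the lemma with $\ft f$ in place of $f$; since the Fourier transform is a unitary map of $L^2(\R^d)$ onto itself carrying radial functions bijectively to radial functions, ${\rm Dist}(\ft f,{\rm Radial}) = {\rm Dist}(f,{\rm Radial})$, which yields \eqref{radial-ineq}. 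Estimate \eqref{radial-ineq-2} follows by discarding the nonnegative ${\rm Dist}$ term, and tracing the resulting chain of (in)equalities backwards shows equality there holds precisely when ${\rm Dist}(f,{\rm Radial})=0$, i.e.\ when $f$ is radial.
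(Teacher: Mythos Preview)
Your proof is correct and follows essentially the same route as the paper: both identify the orthogonal projection onto the radial subspace as the spherical mean $\bar g(r)=|\Sp^{d-1}|^{-1}\int_{\Sp^{d-1}} g(r,\bze)\,\d\bze$ and then integrate the sphere-by-sphere identity $\|g(r,\cdot)-\bar g(r)\|_{L^2(\Sp^{d-1})}^2={\rm Dist}(g(r,\cdot),{\rm Const})^2$ against $r^{d-1}\,\d r$. The only cosmetic difference is that the paper phrases this via the spherical harmonic decomposition $g(r,\cdot)=\sum_{n\ge 0}Y_{n,r}$ (so that $\bar g(r)=Y_{0,r}$ and the distance formula \eqref{dist-to-const-formula} is invoked), whereas you verify the same projection fact by a direct two-sided variational argument; your version is marginally more self-contained in that it does not appeal to the harmonic expansion, while the paper's version dovetails with the machinery already set up in the proof of Theorem~\ref{radial-id-thm}.
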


Let $f\in L^2(\R^d,\dx)$. The functions $g(r,\bxi)=f(r\bxi)$ for $\bxi\in \Sp^{d-1}$ are measurable for almost every $r>0$ and thus we can decompose them in spherical harmonics
$$
g(r,\bxi) = \sum_{n\geq 0} Y_{n,r}(\bxi)
$$
for almost every $r>0$. The projection $Pf(r)$ of $f(\bx)$ in the space of radial functions of $L^2(\R^d,\d\bx)$ is given by
$$
Pf(r) = \frac{1}{|\Sp^{d-1}|}\int_{\Sp^{d-1}} g(r,\bxi) \d\bxi = Y_{0,r}.
$$
By \eqref{dist-to-const-formula} we obtain
$$
\int_{\Sp^{d-1}} |g(r,\bxi) - Pf(r)|^2 \d\bxi = {\rm Dist}(g(r,\cdot),{\rm Const})^2
$$
for almost every $r>0$ and we conclude that
\est{
{\rm Dist}(f,{\rm Radial})^2 & = \int_0^\infty \int_{\Sp^{d-1}} |g(r,\bxi) - Pf(r)|^2 \d\bxi r^{d-1}\d r \\ & = \int_0^\infty  {\rm Dist}(g(r,\cdot),{\rm Const})^2 \d r.
}
This finishes the lemma and the theorem.
\end{proof}

\section{Appendix}\label{appendix}
We now explain the unexpected combinatorial interpretation that we discovered for the coefficients $Q(a,b,c,d)$ in \eqref{Q-coeff}. Let $a,b,c,d$ be non-negative integers and let $N=a+b+c+d>0$. Consider the set $\W(a,b,c,d)=Span\{1^a2^b3^c4^d\}$ of all words formed with $a$ $1$'s, $b$ $2$'s, $c$ $3$'s  and $d$ $4$'s. We can define a way of measuring how different two given words $w=\ell_1\ell_2{{...}}\ell_N$ and $w'=\ell'_1\ell'_2{{...}}\ell'_N$ in $\W(a,b,c,d)$ are by considering the following distance function: $D(w,w')=\#\{i:\ell_i \neq \ell'_i\}$. Now consider the elementary word
$$
e=\underbrace{1 {{...}} 1}_{a} \, \underbrace{2 {{...}} 2}_{b} \, \underbrace{3 {{...}} 3}_{c} \, \underbrace{4 {{...}} 4}_{d}.
$$
We say that a word $w\in \W(a,b,c,d)$ is even if $D(w,e)$ is even, if not we say that $w$ is odd. If we consider the word $w$ to be formed by permuting the letters of the word $e$, then $D(w,e)$ measures how many letters were moved from its original block. For instance, if $a=b=c=d=1$ then $e=1234$ and $w=1243$ is an even word while $w'=1342$ is an odd word. Let $\W_{even}(a,b,c,d)$ and $\W_{odd}(a,b,c,d)$ denote the sets of even and odd words in $\W(a,b,c,d)$ respectively. In what follows we use the convention $\binom n m=0$ if $m<0$ or $m>n$.

\begin{theorem}\label{Q-facts}
For any $a,b,c,d\geq 0$ with $N=a+b+c+d>0$ we have 
\es{\label{comb-connec-form}
Q(a,b,c,d)=\frac{\#\W_{even}(a,b,c,d)-\#\W_{odd}(a,b,c,d)}{2^{N}}.
}
Moreover, we have the following formula
\es{\label{explict-comb-form}
&2^{N}Q(a,b,c,d) \\ & = \sum_{u=0}^U \frac{(a+b-u)!(c+d-u)!(u!)^2}{a!b!c!d!}\bigg(\sum_{r,s=0}^u (-1)^{r+s}\binom a r \binom b {u-r}\binom c s \binom d {u-s}\bigg)^2,
}
where $U=\min\{a+b,c+d\}$. In particular, $Q(a,b,c,d)>0$.
\end{theorem}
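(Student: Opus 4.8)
The plan is to prove the three assertions in sequence, starting from two elementary expansions of $Q(a,b,c,d)$. First, substituting $L_n(x/2)=\sum_{r\ge 0}\binom nr\frac{(-1)^r}{2^r r!}\,x^r$ into the four factors of \eqref{Q-coeff} and integrating term by term via $\int_0^\infty x^m e^{-x}\dx=m!$ gives the finite sum
\[
2^N Q(a,b,c,d)=\sum_{p,q,r,s\ge 0}(-1)^{p+q+r+s}\,2^{\,N-(p+q+r+s)}\binom{p+q+r+s}{p,q,r,s}\binom ap\binom bq\binom cr\binom ds .
\]

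For \eqref{comb-connec-form}, I would compute $\#\W_{even}-\#\W_{odd}$ by inclusion--exclusion: write $(-1)^{D(w,e)}=\prod_{i=1}^N(2\cdot\mathbf 1[w_i=e_i]-1)$, expand the product over subsets $S\subseteq\{1,\dots,N\}$ and sum over $w$, obtaining $\sum_S(-1)^{N-|S|}2^{|S|}\,\#\{w\in\W(a,b,c,d):w|_S=e|_S\}$. Grouping the subsets $S$ by how many of their elements fall in each of the four blocks of $e$ (of sizes $a,b,c,d$) and counting the (multinomial) number of completions of a partially prescribed word, this becomes
\[
\sum_{\alpha\le a,\,\beta\le b,\,\gamma\le c,\,\delta\le d}(-1)^{N-\Sigma}2^{\Sigma}\binom a\alpha\binom b\beta\binom c\gamma\binom d\delta\,\frac{(N-\Sigma)!}{(a-\alpha)!(b-\beta)!(c-\gamma)!(d-\delta)!},\quad \Sigma=\alpha+\beta+\gamma+\delta,
\]
and the change of variables $(p,q,r,s)=(a-\alpha,b-\beta,c-\gamma,d-\delta)$ identifies it with the sum above, proving $\#\W_{even}-\#\W_{odd}=2^N Q(a,b,c,d)$.

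For \eqref{explict-comb-form}, the idea is to split $e^{-x}=e^{-x/2}e^{-x/2}$ in \eqref{Q-coeff}, expand $L_a(x/2)L_b(x/2)=\sum_{u\ge0}\mu_u(a,b)L_u(x)$ in the Laguerre basis, and use orthonormality $\int_0^\infty L_uL_v e^{-x}\dx=\delta_{uv}$ to obtain the factorization $Q(a,b,c,d)=\sum_{u\ge0}\mu_u(a,b)\mu_u(c,d)$ with $\mu_u(a,b)=\int_0^\infty L_a(x/2)L_b(x/2)L_u(x)e^{-x}\dx$. It then suffices to prove
\[
\mu_u(a,b)=\frac{(a+b-u)!\,u!}{a!\,b!\,2^{a+b}}\Bigl(\sum_{r=0}^u(-1)^r\binom ar\binom b{u-r}\Bigr)^{2}.
\]
I would do this through the generating function $\sum_{a,b\ge0}\mu_u(a,b)w_1^aw_2^b$, which a short partial-fraction computation with the Laguerre generating function shows equals $2(w_1+w_2-2w_1w_2)^u/(2-w_1-w_2)^{u+1}$; extracting the coefficient of $w_1^aw_2^b$ and comparing with the claimed right-hand side reduces to a terminating hypergeometric identity, settleable by Zeilberger's algorithm or by a second generating-function computation. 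Substituting the formula for $\mu_u$ back into the factorization and using $\bigl(\sum_r(-1)^r\binom ar\binom b{u-r}\bigr)\bigl(\sum_s(-1)^s\binom cs\binom d{u-s}\bigr)=\sum_{r,s}(-1)^{r+s}\binom ar\binom b{u-r}\binom cs\binom d{u-s}$ then yields \eqref{explict-comb-form}, with the stated range $U=\min\{a+b,c+d\}$ forced by the vanishing of $\sum_r(-1)^r\binom ar\binom b{u-r}=[x^u](1-x)^a(1+x)^b$ for $u>a+b$.

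Finally, positivity is immediate from \eqref{explict-comb-form}: every summand is a product of squares and of factorials of nonnegative integers, and the $u=0$ term equals $\binom{a+b}{a}\binom{c+d}{c}\ge1$, so $2^N Q(a,b,c,d)\ge\binom{a+b}{a}\binom{c+d}{c}>0$. The only nonroutine step is the identity for $\mu_u(a,b)$: it is where the square --- the shadow of the Gram-matrix structure $\Q_S=F_S^*F_S$ appearing in the proof of Theorem \ref{QS-op-thm} --- enters, and it is a genuine terminating hypergeometric evaluation rather than an elementary binomial manipulation. (An alternative route, using the clean expansion $L_n(x/2)=2^{-n}\sum_{k=0}^n\binom nk L_k(x)$ to write $\mu_u(a,b)$ as a weighted sum of triple Laguerre integrals $\int_0^\infty L_iL_jL_u e^{-x}\dx$, runs into essentially the same summation.)
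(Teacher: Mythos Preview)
The paper does not actually prove Theorem~\ref{Q-facts}: it is stated in the Appendix, and the Remark that follows attributes \eqref{comb-connec-form} to Askey, Ismail and Koorwinder (via MacMahon's Master Theorem), and \eqref{explict-comb-form} to Gillis--Kleeman and Gillis--Zeilberger. So there is no in-paper proof to compare against; you are supplying one where the paper only gives references.

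Your argument for \eqref{comb-connec-form} is correct and is a genuinely different route from the cited one. Askey--Ismail--Koorwinder pass through MacMahon's Master Theorem; your inclusion--exclusion via $(-1)^{D(w,e)}=\prod_i(2\cdot\mathbf 1[w_i=e_i]-1)$ is more elementary and self-contained, and the bookkeeping you describe (grouping $S$ by block intersections, then the substitution $(p,q,r,s)=(a-\alpha,b-\beta,c-\gamma,d-\delta)$) matches the term-by-term integral expansion of $Q(a,b,c,d)$ exactly. This is a nice direct proof.

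For \eqref{explict-comb-form}, your strategy --- expand $L_a(x/2)L_b(x/2)$ in the $\{L_u\}$ basis and use Parseval to factor $Q=\sum_u\mu_u(a,b)\mu_u(c,d)$ --- is sound and is essentially the analytic half of the Gillis--Kleeman approach. Your generating-function identity $\sum_{a,b}\mu_u(a,b)w_1^aw_2^b=2(w_1+w_2-2w_1w_2)^u/(2-w_1-w_2)^{u+1}$ is correct (it follows cleanly from $\int_0^\infty L_u(x)e^{-\alpha x}\dx=(\alpha-1)^u/\alpha^{u+1}$). The one place your write-up is thin is the coefficient extraction giving the squared Krawtchouk-type sum for $\mu_u(a,b)$: you defer it to ``Zeilberger's algorithm or a second generating-function computation.'' That is legitimate --- the identity is a terminating ${}_3F_2$ evaluation and is machine-checkable --- but since you correctly flag this as the only nonroutine step, it would strengthen the proposal to at least record which specific summation (e.g.\ a Saalsch\"utz/Pfaff--Saalsch\"utz instance) or which two-variable generating-function manipulation closes it. The positivity conclusion from the $u=0$ term is fine.
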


\noindent {\bf Remark.} The combinatorial connection \eqref{comb-connec-form} was first proved by Askey, Ismail and Koorwinder in \cite{AIK} using the Master MacMahon Theorem. They also showed that $Q(a,b,c,d)>0$ using analytic tools from the theory of Laguerre polynomials. Formula \eqref{explict-comb-form} was first found by Gillis and Kleeman in \cite{GK} using again analytic tools in conjunction with a combinatorial argument. Later, Gillis and Zeilberger in \cite{GZ} found a pure (and quite clever) combinatorial argument for formula \eqref{explict-comb-form}.

\section*{Acknowledgments}
The author is grateful to Emanuel Carneiro, Diogo Oliveira e Silva and Neal Bez for the helpful comments. The author is also grateful for the math inspiration acquired during the time he spent at The University of Texas at Austin working under the guidance of William Beckner.


\begin{thebibliography}{99}

\bibitem{AIK}
R. Askey, M. Ismail, and T. Koorwinder,
\newblock  Weighted permutation problems and Laguerre polynomials,
\newblock Journal of Combinatorial Theory, Series A, 25(3)(1978), 277-287

\bibitem{Be}
W. Beckner, 
\newblock Inequalities in Fourier Analysis,
\newblock Annals of Mathematics, 102 (1975), 159-182.

\bibitem{BBCH}
J. Bennett, N. Bez, A. Carbery and D. Hundertmark,
\newblock Heat-Flow Monotonicity of Strichartz Norms,
\newblock Analysis and Partial Differential Equations 2(2)(2008), 147-158.

\bibitem{BS}
N. Bez and M. Sugimoto, 
\newblock Optimal constants and extremizers for some smoothing
estimates, 
\newblock (to appear in Journal d'Analyse Mathematique) arXiv:1206.5110.

\bibitem{Ca}
E. Carneiro,
\newblock A sharp inequality for the Strichartz norm,
\newblock  Int. Math. Res. Notices 2009 (2009), 3127-3145.

\bibitem{DX}
F. Dai and Y. Xu, 
\newblock Approximation Theory and Harmonic Analysis on Spheres and Balls, 
\newblock Springer Monographs in Mathematics, New York, NY, 2013.

\bibitem{DF} 
D. Foschi and D. Oliveira e Silva,
\newblock Some recent progress on sharp Fourier restriction theory, 
\newblock  arXiv:1701.06895.
New York, NY, 2013.

\bibitem{Fo} 
D. Foschi, 
\newblock Maximizers for the Strichartz inequality, 
\newblock J. Eur. Math. Soc. 9 (2007), 739-774.

\bibitem{GK}
J. Gillis and J. Kleeman,
\newblock A combinatorial proof of a positivity result,
\newblock Math. Proc. Camb. Phil. Soc. 86
(1979), 13-19.

\bibitem{Gon2}
F. Gon\c{c}alves,
\newblock A Central Limit Theorem for Operators, 
\newblock Journal of Functional Analysis, v. 271 (6), p. 1585-1603, (2016).

\bibitem{GZ}
J. Gillis and D. Zeilberger,
\newblock A Direct Combinatorial Proof of a Positivity Result,
\newblock Europ. J. Combinatorics 4 (1983), 221-223.

\bibitem{GR}
I. S. Gradshteyn and I. M. Ryzhik,
\emph{Table of integrals, series, and products.}
Translated from the Russian. Seventh edition. Elsevier/Academic Press, Amsterdam, (2007).
 
\bibitem{HZ} 
D. Hundertmark and V. Zharnitsky, 
\newblock On sharp Strichartz inequalities in low dimensions, 
\newblock Int. Math. Res. Not. (2006), 1-18.

\bibitem{Sim}
B. Simon,
\newblock Best constants in some operator smoothness estimates, 
\newblock J. Funct. Anal. 107 (1992), 66-71.

\bibitem{St} 
R. Strichartz, 
\newblock Restrictions of Fourier transforms to quadratic surfaces and decay of solutions of wave equations, 
\newblock Duke Math. J. 44 (1977), 705-714.

\bibitem{Sh} 
S. Shao, 
\newblock Maximizers for the Strichartz inequalities and the Sobolev-Strichartz inequalities for the Schr\"odinger equation, \newblock Electronic Journal of Differential Equations 2009 (2009), no. 03, 1-13.

\bibitem{SW}
E. M. Stein and G. Weiss,
\newblock Interpolation of operators with change of measures,
\newblock Trans. Amer. Math. Soc. 87 (1958), 159-172.

\bibitem{Sz}
G. Szeg\"o,
\newblock {\it Orthogonal Polynomials},
\newblock American Mathematical Society Colloquium Publications Volume XXIII, Fourth Edition, 1975.

\bibitem{Tao}
T. Tao,
\newblock {\it Nonlinear dispersive equations: Local and global analysis}, 
\newblock CBMS Regional Conference Series in Mathematics 106.

\bibitem{Wat}
K. Watanabe, 
\newblock Smooth perturbations of the self-adjoint operator $|\Delta|^{\alpha/2}$, 
\newblock Tokyo J. Math. 14 (1991), 239-250.


\end{thebibliography}
\end{document}